\newcommand{\ua}{\underline{a}}
\newcommand{\ut}{\underline{t}}
\newcommand{\us}{\underline{s}}
\newcommand{\ur}{\underline{r}}
\newcommand{\ux}{\underline{x}}
\newcommand{\uX}{\underline{X}}
\newcommand{\uy}{\underline{y}}
\newcommand{\uz}{\underline{z}}
\newcommand{\uu}{\underline{u}}
\newcommand{\uU}{\underline{U}}
\newcommand{\uw}{\underline{w}}
\newcommand{\up}{\underline{p}}
\newcommand{\uW}{\underline{W}}
\newcommand{\uP}{\underline{P}}
\newcommand{\rta}{\rightarrow}
\newcommand{\conv}{\rightsquigarrow}
\newcommand{\Lstar}{\mathcal{L}^{\omega}_*}
\newcommand{\IL}{\mathrm{IL}^{\omega}_*}
\newcommand{\HA}{\mathrm{HA}^{\omega}_*}
\newcommand{\ILp}{\mathrm{IL}}
\newcommand{\AC}{\mathrm{AC}^{\omega}_*}
\newcommand{\Red}{\mathrm{Red}}
\theoremstyle{thmstyleone}%
\newtheorem{theorem}{Theorem}
\newtheorem{definition}{Definition}
\newtheorem{remark}{Remark}
\newtheorem{lemma}{Lemma}
\newtheorem{corollary}{Corollary}
\begin{document}

\title[Herbrandized modified realizability]{Herbrandized modified realizability}


\author*[1,2]{\fnm{Gilda} \sur{Ferreira}}\email{gmferreira@fc.ul.pt}

\author[3]{\fnm{Paulo} \sur{Firmino}}\email{fc49883@alunos.ciencias.ulisboa.pt}


\affil*[1]{\orgdiv{DCeT}, \orgname{Universidade Aberta}, \postcode{1269-001}, \state{Lisboa}, \country{Portugal}}

\affil[2]{\orgname{Centro de Matem\'atica, Aplica\c{c}\~oes Fundamentais e Investiga\c{c}\~ao Operacional}, \orgaddress{\street{Faculdade de Ci\^encias, Universidade de Lisboa}, \postcode{1749-016}, \state{Lisboa}, \country{Portugal}}}

\affil[3]{\orgdiv{Departamento de Matem\'atica}, \orgname{Faculdade de Ci\^{e}ncias}, \orgaddress{\street{Universidade de Lisboa}, \postcode{1749-016}, \state{Lisboa}, \country{Portugal}}}


\abstract{Realizability notions in mathematical logic have a long history, which can be traced back to the work of Stephen Kleene in the 1940s, aimed at exploring the foundations of intuitionistic logic. Kleene's initial realizability laid the ground for more sophisticated notions such as Kreisel's modified realizability and various modern approaches. In this context, our work aligns with the lineage of realizability strategies that emphasize the accumulation, rather than the propagation of precise witnesses. In this paper, we introduce a new notion of realizability, namely \emph{herbrandized modified realizability}. This novel form of (cumulative) realizability, presented within the framework of semi-intuitionistic logic is based on a recently developed \emph{star combinatory calculus}, which enables the gathering of witnesses into nonempty finite sets. We also show that the previous analysis can be extended from logic to (Heyting) arithmetic.}

\keywords{Realizability, star combinatory calculus, finite sets, intuitionistic logic, Heyting arithmetic}


\pacs[MSC Classification]{03F10, 03B20, 03B40, 03F30, 03F25}

\maketitle

\section{Introduction}\label{Introduction}

Notions of realizability have a rich history, with the first generation dating back to the 40s, with the pioneering Kleene's realizability (1945) \cite{Kleene(1945)} and Kreisel's modified realizability (1962) \cite{Kreisel(1962)}. These realizability strategies, closely connected with the Brouwer-Heyting-Kolmogorov foundational interpretation of intuitionistic logic developed in the early 1900s, aim to explicitly reveal the constructive content of proofs, decide disjunctions, and offer precise witnesses for existential statements. Later, a second generation of realizability strategies emerged, shifting the focus from precise witnesses to bounds for those witnesses. Examples of such realizability strategies include the bounded modified realizability (BMR) \cite{FerreiraNunes(06)} and the confined modified realizability (CMR) \cite{FerreiraOliva(10)}, which draw inspiration from earlier works of Ulrich Kohlenbach, Fernando Ferreira, and Paulo Oliva. In 1996, Kohlenbach introduced the monotone functional interpretation \cite{Kohlenbach(96)}, and in 2005, Ferreira and Oliva introduced the bounded functional interpretation \cite{FerreiraOliva(05)}. [Although with different approaches] Both strategies differ from the well-known G\"{o}del's functional (Dialectica) interpretation since they do not rely on precise witnesses but instead on majorizing functionals. BMR and CMR apply the concept of majorizability in all finite types to the realizability method, also delivering bounds instead of precise realizers.

In 2017, a new type of combinatory calculus was introduced, incorporating star types \cite{FerreiraFerreira(17)} that enable the formation of sets of potential witnesses. The focus can now shift from bounds for witnesses to finite sets of possible witnesses. Based on this calculus, a cumulative/herbrandized functional interpretation was developed within the framework of classical first-order predicate logic, drawing on the corresponding Shoenfield version of the Dialectica interpretation. This cumulative interpretation followed in the footsteps of a previous work of Benno van den Berg, Eyvind Briseid and Pavol Safarik \cite{BergBriseidSafarik(12)} in the context of nonstandard arithmetic. For a comparison between this later approach and the one pursued in \cite{FerreiraFerreira(17)}, see \cite{Borges(16)}. Another cumulative interpretation based on (this time denumerable) sets can be found in \cite{AvigadFeferman(98)}.

Based on the above prior research, this paper shows how the star combinatory calculus can serve as the basis for a novel modified realizability approach, namely the herbrandized modified realizability, within the domains of semi-intuitionistic logic and Heyting arithmetic. 

\vspace{0.5cm}

{{\bf{Overview:}} The paper is structured as follows. In the next section we recall the star combinatory calculus and its nice proof-theoretic properties. In Section 3 we introduce the herbrandized modified realizability in the semi-intuitionistic context and establish the corresponding soundness and characterization theorems. We finish the paper with some final notes, including the possibility of extending the previous analysis to the Heyting arithmetic context.
	
\section{Background: star combinatory calculus}\label{star}

Let us fix a language $\mathcal{L}$ of pure first-order logic containing at least one constant symbol. We are going to define the language $\Lstar$, a language in all finite types (based on a given ground type $G$).

\begin{definition}
	The \emph{types} are inductively generated as follows:
	
	\begin{itemize}
		\item[(i)] The \emph{ground type} ($G$) is a type;
		\item[(ii)] If $\sigma$ and $\tau$ are types, $\sigma \rightarrow \tau$ is a type;
		\item[(iii)] If $\sigma$ is a type, $\sigma^*$ is a type (a \emph{star type}).
	\end{itemize}
\end{definition}

Each type represents a class of objects. As usual, informally  the type $\sigma \rightarrow \tau$ represents the functionals from objects of type $\sigma$ to objects of type $\tau$. The intended meaning of $\sigma^*$ is the type of all finite non-empty subsets of objects of type $\sigma$.

\begin{remark}\label{generictype}
	Any type is of the form $\sigma_1 \rta (\sigma_2 \rta (\cdots \rta (\sigma_n \rta \rho)\cdots))$ with $\rho$ the type $G$ or a star type. We follow the convention of associating $\rta$ to the right, abbreviating the above type by $\sigma_1 \rta \cdots \rta \sigma_n \rta \rho$.
\end{remark}


For each functional symbol $f$ of $\mathcal{L}$, with arity $n$ ($n\geq 0$), $f$ denotes a constant of $\Lstar$, of type $G \rta \cdots \rta G \rta G$ with $n$ arrows.

Additionally, we have the \emph{logical constants} or \emph{combinators}:
\begin{itemize}
	\item[] $\Pi_{\sigma,\tau}$ of type $\sigma \rightarrow \tau \rightarrow \sigma$, for each pair of types $\sigma$, $\tau$;
	\item[] $\Sigma_{\rho,\sigma,\tau}$ of type $(\rho \rightarrow \sigma \rightarrow \tau) \rightarrow (\rho \rightarrow \sigma) \rightarrow \rho \rightarrow \tau$, for each triple of types $\rho$, $\sigma$, $\tau$;
\end{itemize}

and the \emph{star constants}:
\begin{itemize}
	\item[] $\mathfrak{s}_{\sigma}$ of type $\sigma \rightarrow \sigma^*$, for each type $\sigma$;
	\item[] $\cup_{\sigma}$ of type $\sigma^* \rightarrow \sigma^* \rightarrow \sigma^*$, for each type $\sigma$;
	\item[] $\bigcup_{\sigma,\tau}$ of type $\sigma^* \rightarrow (\sigma \rightarrow \tau^*) \rightarrow \tau^*$, for each pair of types $\sigma$, $\tau$.
\end{itemize}


\begin{definition}
	
	The terms of $\Lstar$ are inductively generated as follows:
	
	\begin{itemize}
		\item[(i)] Constants are terms;
		\item[(ii)] For each type $\sigma$, there exists a countable infinite number of variables of type $\sigma$ (usually denoted by $x, y, z \ldots$). Variables are terms;
		\item[(iii)] If $t$ is a term of type $\sigma \rightarrow \tau$ and $q$ is a term of type $\sigma$, then  $tq$ is a term of type $\tau$.
	\end{itemize}
\end{definition}

The fact that the term $t$ has type $\sigma$ is sometimes denoted by $t^{\sigma}$ or $t:\sigma$.

\vspace{.3cm}

Concerning the intended meaning of the star constants above: $\mathfrak{s}_{\sigma}t$ represents a set with a unique element $t$; $\cup_{\sigma}tq$ represents the union of the sets $t$ and $q$ and  $\bigcup_{\sigma,\tau}tq$ for $t: \sigma^*$ and $q: \sigma \rta \tau^*$ represents the union $\bigcup_{w\in t} qw$.


\vspace{.3cm}

The star combinatory calculus has the following conversions:
\begin{itemize}
	\item[] $\Sigma_{\rho,\sigma,\tau} tqr \rightsquigarrow tr(qr)$ ($t: \rho \rightarrow \sigma \rightarrow \tau,\,q: \rho \rightarrow \sigma,\,r: \rho$)
	\item[] $\Pi_{\sigma,\tau}tq \rightsquigarrow t$ ($t: \sigma,\,q: \tau$)
	\item[] $\bigcup_{\sigma,\tau}(\mathfrak{s}_\sigma t)q \rightsquigarrow qt$ ($t: \sigma, \,q: \sigma \rta \tau^*$)
	\item[] $\bigcup_{\sigma,\tau}(\cup_\sigma tq)r \rightsquigarrow \cup_\tau(\bigcup_{\sigma,\tau} tr)(\bigcup_{\sigma,\tau} qr)$ ($t: \sigma^*, \,q: \sigma^*,\, r: \sigma \rta \tau^*$)
\end{itemize}


Given two terms $t$ and $q$ of the same type, we say that we have a \emph{one-step reduction}, which we denote by $t \succ_1 q$, if $q$ is obtained from $t$ applying one of the above conversions to a subterm of $t$. We write $t \succeq q$, and say that \emph{$t$ reduces to $q$} (or we have a \emph{reduction} from $t$ to $q$) if $q$ is obtained from $t$ by an arbitrary number (possibly zero) of one-step reductions.

A term is said to be \emph{normal} if it is not possible to apply any one-step reduction on the term.
A term $t$ is said to be \emph{strongly normalizable} if there is no infinite chain of one-step reductions starting from $t$. 

The star combinatory calculus has the following three key properties, whose proofs can be found on \cite{FerreiraFerreira(17)}.

\begin{theorem}
	The star combinatory calculus has the strong normalization property.
\end{theorem}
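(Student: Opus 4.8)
The plan is to prove strong normalization by the Tait--Girard method of \emph{reducibility} (computability) predicates: for each type $\sigma$ I would define a set $\Red_\sigma$ of reducible terms of type $\sigma$, and then show that every term is reducible, whence strongly normalizable. The definition proceeds by recursion on types. For the ground type I set $\Red_G$ to be the set of strongly normalizable terms of type $G$; for an arrow type I put $t \in \Red_{\sigma \rta \tau}$ iff $tq \in \Red_\tau$ for every $q \in \Red_\sigma$. The delicate case is the star type: since an eliminator-based definition would refer to $\Red_{\tau^*}$ for an arbitrary $\tau$ and thus fail to be well-founded, I would instead define $\Red_{\sigma^*}$ as the \emph{least} set $X$ of strongly normalizable terms of type $\sigma^*$ that (i) contains every neutral term all of whose one-step reducts lie in $X$, (ii) contains $\mathfrak{s}_\sigma s$ whenever $s \in \Red_\sigma$, and (iii) contains $\cup_\sigma t_1 t_2$ whenever $t_1, t_2 \in X$. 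Here \emph{neutral} means not of the form $\mathfrak{s}_\sigma(\cdot)$ nor $\cup_\sigma(\cdot)(\cdot)$, i.e.\ not an introduction for the star type; the clauses are monotone in $X$ and map subsets of strongly normalizable terms to such subsets, so the least fixed point exists, and crucially $\Red_\sigma$ in clause (ii) is already available because $\sigma$ is a proper subtype of $\sigma^*$.

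Next I would establish the three standard closure properties, abbreviated (CR1)--(CR3): every reducible term is strongly normalizable; reducibility is preserved under reduction; and a neutral term all of whose one-step reducts are reducible is itself reducible. For the ground and arrow types these are routine. For the star type all three follow by induction on the inductive (least-fixed-point) generation of $\Red_{\sigma^*}$: clause (i) is exactly (CR3), while clauses (ii)--(iii) are stable under the internal reductions that are the only ones available to introductions, giving both (CR1) and (CR2).

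With (CR1)--(CR3) in hand, the core of the argument is to prove that each constant of $\Lstar$ is reducible; for the function symbols of $\mathcal{L}$, which are neutral of ground result type, this is immediate from (CR3). For $\Pi$ and $\Sigma$ it is the classical computation: using (CR3) one reduces the claim to checking that the contractum of each conversion is reducible, with an auxiliary induction on the sum of the maximal reduction lengths of the (reducible, hence strongly normalizable) arguments to absorb the internal reductions. The constants $\mathfrak{s}_\sigma$ and $\cup_\sigma$ are handled by clauses (ii) and (iii), together with (CR3) for their partial applications. The real work is the reducibility of $\bigcup_{\sigma,\tau}$: given $t \in \Red_{\sigma^*}$ and $q \in \Red_{\sigma \rta \tau^*}$ I must show $\bigcup_{\sigma,\tau} t q \in \Red_{\tau^*}$, and I would do this by induction on the generation of $t \in \Red_{\sigma^*}$. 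When $t$ is $\mathfrak{s}_\sigma s$ the conversion gives $\bigcup_{\sigma,\tau}(\mathfrak{s}_\sigma s)q \conv qs \in \Red_{\tau^*}$; when $t$ is $\cup_\sigma t_1 t_2$ the conversion gives $\cup_\tau(\bigcup_{\sigma,\tau}t_1 q)(\bigcup_{\sigma,\tau}t_2 q)$, which lies in $\Red_{\tau^*}$ by the induction hypothesis on $t_1,t_2$ together with clause (iii); and when $t$ is neutral one closes under (CR3).

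The main obstacle I anticipate is precisely this last case, because the conversion $\bigcup_{\sigma,\tau}(\cup_\sigma tq)r \conv \cup_\tau(\bigcup_{\sigma,\tau}tr)(\bigcup_{\sigma,\tau}qr)$ \emph{duplicates} the argument $r$, so a single structural induction does not immediately account for every one-step reduct of $\bigcup_{\sigma,\tau}tq$. To discharge (CR3) here I would nest, inside the induction on the generation of $t$, a secondary induction on the sum of the maximal reduction lengths of $t$ and $q$, checking that every one-step reduct of $\bigcup_{\sigma,\tau}tq$---be it the head contraction or an internal reduction inside $t$ or inside $q$---is reducible, and then applying (CR3). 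Once all constants are shown reducible, an easy induction on term structure shows that any term becomes reducible under substitution of reducible terms for its free variables; taking the variables themselves, which are reducible by (CR3) since they have no reducts, yields that every term is reducible and hence strongly normalizable by (CR1).
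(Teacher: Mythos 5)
Your route is genuinely different from the paper's. The paper defers the proof of this theorem to Ferreira--Ferreira \cite{FerreiraFerreira(17)} and reproduces that technique for the arithmetical extension in Subsection~\ref{star_arit}: there the circularity at star types is resolved not by a least-fixed-point definition but by the notion of \emph{surface elements}, setting $t \in \Red_{\sigma^*}$ iff $t$ is strongly normalizable and every surface element of every reduct of $t$ lies in $\Red_\sigma$, where $\mathrm{SM}(\mathfrak{s}q)=\{q\}$, $\mathrm{SM}(\cup qr)=\mathrm{SM}(q)\cup \mathrm{SM}(r)$, and $\mathrm{SM}(t)=\emptyset$ otherwise. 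This is well-founded for the same reason your clause (ii) is (surface elements have type $\sigma$) and needs no fixed-point machinery. Your Girard-style inductive candidates are a legitimate alternative, and your treatment of $\bigcup$ (outer induction on the generation of $t$, head contraction discharged by clauses (ii)/(iii), inner induction on reduction lengths for internal reductions) is the analogue, in your setting, of what the paper does for its recursor. One small omission there: for the inner induction to go through on reducts such as $\bigcup(\cup t_1't_2)q$, you need that the set $\{t : \forall q \in \Red_{\sigma\rta\tau^*}\ \bigcup tq \in \Red_{\tau^*}\}$ is closed under reduction of $t$; this is one line from (CR2), but it must be said.

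There is, however, one step that fails as written: (CR3) at arrow types. In a \emph{combinatory} calculus, with ``neutral'' meaning ``not of the form $\mathfrak{s}(\cdot)$ nor $\cup(\cdot)(\cdot)$'', the partial applications $\Pi a$, $\Sigma ab$, $\bigcup s$ are all neutral, yet they create head redexes the moment they are applied, and the (CR3) hypothesis says nothing about their subterms. Concretely, let $a$ be any normal term of type $\sigma$ and consider $\Pi_{\sigma,\tau}a$ of type $\tau \rta \sigma$: it has no one-step reducts at all, so your (CR3) would immediately yield $\Pi a \in \Red_{\tau\rta\sigma}$; then for any reducible $q$ (e.g.\ a variable, reducible by (CR3) as you note), $\Pi a q \conv a$ and (CR2) give $a \in \Red_\sigma$. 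Thus (CR3) at arrow types already asserts that \emph{every normal term is reducible}, which is essentially the theorem itself and certainly not obtainable by the ``routine'' argument --- the routine proof of (CR3) breaks precisely in the head-contraction case. This is a known disanalogy between combinatory and lambda formulations (in the latter, neutrality excludes abstractions, and no partial-application problem arises). The fix is standard and is exactly how the paper proceeds: state and use (CR1)--(CR3) only at ground and star types, and establish reducibility of each constant by applying it to reducible arguments all the way down to a term of ground or star type (compare the paper's proof that $Rtqrt_1\cdots t_n \in \Red_\rho$ with $\rho$ a base type); at that point the head contractum, e.g.\ $ac(bc)\ut$ for $\Sigma abc\,\ut$, is reducible by the arrow-type definition of reducibility of the arguments, and internal reductions are absorbed by your induction on reduction lengths. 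With (CR3) so confined --- and noting that variable-headed and $f$-headed terms never produce head redexes, so variables and the function symbols of $\mathcal{L}$ remain unproblematic --- your argument goes through.
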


\begin{theorem} \label{CR}
	The star combinatory calculus has the Church-Rosser property, that is, each term has a unique normal form.
\end{theorem}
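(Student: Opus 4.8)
The plan is to derive the Church--Rosser property from \emph{local} confluence by appealing to Newman's Lemma, which is available precisely because strong normalization has already been established (the strong normalization theorem above). Recall that Newman's Lemma states that a strongly normalizing relation is confluent if and only if it is locally confluent (weakly Church--Rosser), that is, whenever $t \succ_1 u$ and $t \succ_1 v$ there is a common $w$ with $u \succeq w$ and $v \succeq w$. Since every term is strongly normalizable, it therefore suffices to verify local confluence of $\succ_1$, after which full confluence of $\succeq$ follows. The ``unique normal form'' formulation is then immediate: existence of a normal form comes from strong normalization, and uniqueness from confluence, since two distinct normal forms reachable from the same term would both have to reduce to a common term, contradicting normality.

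To check local confluence I would analyse the relative positions of the two contracted redexes inside $t$. If the redexes occupy disjoint positions they commute trivially and the diagram closes in one step on each side. The interesting cases are overlaps, and here the shape of the left-hand sides is decisive: the four conversion rules have pairwise distinct head symbols among $\Sigma$, $\Pi$ and $\bigcup$, so no two rule instances can be contracted at the same root position. Moreover, the only proper subterms appearing in a left-hand pattern are $\mathfrak{s}t$ and $\cup_\sigma tq$, neither of which is itself a redex, since no conversion has a left-hand side headed by $\mathfrak{s}$ or by $\cup$. Consequently the system has no critical pairs: it is an orthogonal (left-linear, overlap-free) combinatory rewrite system, and the only way one redex can sit inside another is as a subterm of one of the arguments, where contracting it never destroys the surrounding pattern.

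It remains to treat this nested case, where one redex lies strictly inside an argument of the other, and this is where the main care is needed. The rule for $\Sigma$ and the second rule for $\bigcup$ each duplicate an argument: $r$ occurs twice in $tr(qr)$ and twice in $\cup_\tau(\bigcup_{\sigma,\tau} tr)(\bigcup_{\sigma,\tau} qr)$. Hence contracting the outer redex first and then a redex inside $r$ may require contracting the corresponding inner redex in two places, whereas the reverse order needs only a single step; the diagrams still close, but asymmetrically, which is exactly why local confluence must be closed with the multi-step relation $\succeq$ rather than with single steps. I expect this bookkeeping around the duplicating rules to be the principal obstacle, all other overlaps being resolved in one step on each branch. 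Once local confluence is confirmed, Newman's Lemma yields confluence, and together with strong normalization this gives the asserted unique normal form. Alternatively one could bypass Newman's Lemma and prove confluence directly by the Tait--Martin-L\"of method, defining a parallel reduction relation and establishing its diamond property, but since strong normalization is already in hand the route through local confluence is the most economical.
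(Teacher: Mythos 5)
Your proposal is correct and takes essentially the same route as the paper's own argument (given explicitly for the arithmetical extension and cited to Ferreira--Ferreira for the pure calculus): Newman's Lemma reduces Church--Rosser to local confluence given the already-established strong normalization, and local confluence is verified by the same case analysis --- disjoint redexes commute, there are no critical overlaps, and a redex nested inside an argument of another is closed by contracting all residual copies after the outer step, which is exactly why closure uses $\succeq$ rather than a single step. Nothing essential is missing.
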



\begin{definition}
	A term $t$ of star type is said to be \emph{set-like} if it is constructed from terms of the form $\mathfrak{s}q$ and the constant $\cup$.
\end{definition}

\begin{theorem}
	If $t$ is a normal closed term of star type, then $t$ is set-like. 
\end{theorem}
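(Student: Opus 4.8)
The plan is to argue by induction on the structure of $t$, exploiting the fact that in a purely applicative, binder-free calculus every term decomposes uniquely as a head applied to a list of arguments. First I would observe that any term can be written as $h\,t_1\,t_2\cdots t_n$, where $h$ is an atom (a variable or a constant) and $t_1,\dots,t_n$ are terms: since application associates to the left, one simply peels arguments off the right until an atom remains. Two standing remarks make the induction go through. Because there are no binders, \emph{closed} just means \emph{containing no variables}, so every subterm of a closed term is again closed; and since a redex occurring in a subterm would already be a redex in $t$, every subterm of a normal term is normal. Thus each $t_i$ above is a closed normal term, to which the induction hypothesis may be applied when its type is a star type.

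The heart of the proof is then a case analysis on the head $h$, constrained simultaneously by $t$ being closed (so $h$ must be a constant, not a variable), by $t$ having star type, and by $t$ being normal. The heads coming from a function symbol $f$, or from $\Pi$ or $\Sigma$, are eliminated by typing alone: no type-correct number of arguments applied to these constants ever yields a star type (one only obtains $G$ or a function type), and in the $\Pi,\Sigma$ cases supplying enough arguments to reach a non-functional result would in any case create a redex. If $h=\mathfrak{s}_\sigma$, type-correctness forces exactly one argument, since $\mathfrak{s}_\sigma t_1$ already has the non-functional type $\sigma^*$; hence $t=\mathfrak{s}_\sigma t_1$, which is set-like by definition. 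If $h=\cup_\sigma$, type-correctness forces exactly two arguments $t_1,t_2$, both of star type $\sigma^*$ (fewer arguments give a function type, and $\cup_\sigma$ is never itself the head of a redex); these are closed normal subterms, so the induction hypothesis makes each set-like, whence $\cup_\sigma t_1 t_2$ is set-like as well.

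The step I expect to be the crux is the case $h=\bigcup_{\sigma,\tau}$, and it is exactly here that the induction hypothesis does essential work. Typing forces $t=\bigcup_{\sigma,\tau}\,t_1\,t_2$ with $t_1:\sigma^*$ (fewer arguments leave a function type, more are type-incorrect since $\tau^*$ is not a function type). Now $t_1$ is a closed normal subterm of star type, so by the induction hypothesis it is set-like; consequently the head of $t_1$ is either $\mathfrak{s}$ or $\cup$, i.e.\ $t_1$ has the shape $\mathfrak{s}_\sigma s$ or $\cup_\sigma a\,b$. But then $\bigcup_{\sigma,\tau}(\mathfrak{s}_\sigma s)\,t_2$ or $\bigcup_{\sigma,\tau}(\cup_\sigma a\,b)\,t_2$ is precisely the left-hand side of one of the two $\bigcup$-conversions, so $t$ contains a redex, contradicting normality. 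Hence $\bigcup$ cannot be the head, the only surviving heads are $\mathfrak{s}$ and $\cup$, and in both the resulting term is set-like.

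The main obstacle, then, is not any single computation but recognising that the $\bigcup$-case must be closed by feeding the already-proved conclusion back into the argument: one needs the set-like structure of the first argument $t_1$ in order to exhibit a forbidden redex. I would finish by noting that the induction is well-founded, since wherever the hypothesis is invoked it is applied to proper subterms of $t$, and by checking that the type annotations on $\mathfrak{s}_\sigma$, $\cup_\sigma$ and $\bigcup_{\sigma,\tau}$ are consistent throughout so that no tacit type coercion is hidden in the argument counts.
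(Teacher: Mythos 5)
Your proof is correct. The paper itself does not spell out a proof of this theorem---it defers to \cite{FerreiraFerreira(17)} (and, for the arithmetical analogue, to \cite{Ferreira(20)})---but your argument is essentially the standard one given there: decompose the closed normal term as a constant head applied to arguments, eliminate the heads $f$, $\Pi$, $\Sigma$ by typing together with normality, and rule out the head $\bigcup$ by applying the induction hypothesis to its first argument $t_1$ of star type, whose set-like form $\mathfrak{s}q$ or $\cup ab$ would expose a $\bigcup$-redex, leaving only $\mathfrak{s}$ and $\cup$ as possible heads.
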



For each type $\rho$, we have an equality symbol $=_\rho$ and a membership symbol $\in_\rho$. 

An \emph{atomic formula} is a formula of the form $\bot$, $t^\rho =_\rho q^\rho$ , $t^\rho \in_\rho q^{\rho^*}$ or $R(t_1^G,\ldots,t_n^G)$ where $R$ is a relational symbol of $\mathcal{L}$.


\begin{definition}
	The \emph{formulas} of $\Lstar$ are inductively generated as follows:
	\begin{itemize}
		\item[(i)] Atomic formulas are formulas;
		\item[(ii)] If $A$ and $B$ are formulas and $x$ is a variable, then $A \lor B, A \land B, A \rta B,\forall x \, A$ and $\exists x \, A$ are formulas.
		\item[(iii)] If $A$ and $B$ are formulas, $x$ is a variable of type $\rho$ and $t$ is a term of type $\rho^*$ where $x^{\rho}$ does not occur, then $\forall x \in t\, A, \exists x \in t\, A$ are formulas. [$\forall x \in t$ and $\exists x \in t$ are called bounded quantifications.]
	\end{itemize}
\end{definition}

A formula is said to be \emph{$\exists$-free} if it does not contain any unbounded existential quantifiers $\exists x$.

We use the following abbreviations:

\begin{itemize}
	\item[] $\lnot A :\equiv A \rightarrow \bot$
	\item[] $A \leftrightarrow B :\equiv (A \rightarrow B) \land (B \rightarrow A)$
\end{itemize}


\vspace{.3cm}

The theory $\IL$ that we are about to introduce is intuitionistic logic in all finite types. First we start by listing what we call the $\ILp$ axioms and rules (following the formalization in \cite{AvigadFeferman(98)}).

\paragraph{Axioms of $\ILp$\textup{:}}
\begin{itemize}
	\item[] $A \lor A \rightarrow A$, $A \rightarrow A \land A$
	\item[] $A \rightarrow A \lor B$, $A \land B \rightarrow A$
	\item[] $A \land B \rightarrow B \land A, A \lor B \rightarrow B \lor A$
	\item[] $\bot \rightarrow A$
	\item[] $\forall x\, A \rightarrow A[t/x]$, $A[t/x] \rightarrow \exists x\, A$, where $t$ is a term free for $x$ in $A$ and $A[t/x]$ denotes the formula obtained from $A$ after replacing each instance of $x$ by $t$.
\end{itemize}

\paragraph{Rules of $\ILp$\textup{:}}
\begin{tabular}{lll}
	\begin{tabular}{c}
		$A, A \rightarrow B$\\
		\hline
		$B$
	\end{tabular}
	&
	\begin{tabular}{c}
		$A \rightarrow B, B \rightarrow C$\\
		\hline
		$A \rightarrow C$
	\end{tabular}
	&
	\begin{tabular}{c}
		$A \rightarrow B$\\
		\hline
		$C \lor A \rightarrow C \lor B$
	\end{tabular}
\end{tabular}

\begin{tabular}{ll}
	\begin{tabular}{c}
		$A \land B \rightarrow C$\\
		\hline
		$A \rightarrow (B \rightarrow C)$
	\end{tabular}
	&
	\begin{tabular}{c}
		$A \rightarrow (B \rightarrow C)$ \\
		\hline
		$A \land B \rightarrow C$
	\end{tabular}
\end{tabular}
	

\vspace{.3cm}

In the following rules, $x$ does not occur free in $B$.

\vspace{.3cm}

\begin{tabular}{ll}
	\begin{tabular}{c}
		$B \rightarrow A$\\
		\hline
		$B \rightarrow \forall x\, A$
	\end{tabular}
&
	\begin{tabular}{c}
		$A \rightarrow B$\\
		\hline
		$\exists x\, A \rightarrow B$
	\end{tabular}
\end{tabular}

\vspace{.3cm}

The theory $\IL$ is an extension of $\ILp$ (in the language of all finite types) with the following axioms and axiom schemes:
\begin{itemize}
	\item[(i)] Axioms for $=_\rho$:
	\begin{itemize}
		\item[] $x =_\rho x$
		\item[] $x =_\rho y \land A \rta A'$, where $A$ is an atomic formula and $A'$ is obtained from $A$ replacing some instances of $x$ by $y$.
	\end{itemize}
	
	\item[(ii)] $\forall x \in t\, A(x) \leftrightarrow \forall x (x \in t \rightarrow A(x))$, $\exists x \in t\, A(x) \leftrightarrow \exists x (x \in t \land A(x))$
	
	\item[(iii)] $\Sigma xyz= xz(yz)$, $\Pi xy = x$
	
	\item[(iv)] Axioms for star types:
	\begin{itemize}
		\item[] $w \in \mathfrak{s}x \leftrightarrow w=x$
		\item[] $w \in \cup xy \leftrightarrow w \in x \lor w \in y$
		\item[] $z \in x \land w \in yz \rightarrow w \in \bigcup xy$
		\item[] $\bigcup(\mathfrak{s}x)y = yx$
		\item[] $\bigcup(\cup xy)z = \cup(\bigcup xz)(\bigcup yz)$
	\end{itemize}
\end{itemize}

\begin{remark}
	From the equalities of the conversions and the axioms for equality, we conclude that reduction implies equality.
\end{remark}

\begin{lemma} \label{setlike}
	Let $t$ be a closed term of type $\rho^*$. Then there are closed terms $q_1,\ldots,q_n$ of type $\rho$ such that
	\begin{equation*}
		\IL \vdash w\in t \leftrightarrow w=q_1 \lor \ldots \lor w=q_n
	\end{equation*}
\end{lemma}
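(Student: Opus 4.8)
The plan is to reduce the problem to the case of a \emph{normal} term, where the structural description supplied by the theorem on normal closed terms of star type applies, and then to strip off the set-theoretic structure by induction using the membership axioms for $\mathfrak{s}$ and $\cup$. Throughout I rely on the remark that reduction implies equality, which lets me pass freely between a term and its normal form inside $\IL$.

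First I would normalize. By the strong normalization property together with the Church--Rosser property (Theorem~\ref{CR}), the term $t$ has a unique normal form $t'$. Since each conversion relates terms of the same type and never introduces new free variables, reduction preserves type and closedness, so $t'$ is again a closed term of type $\rho^*$. By the remark, $t \succeq t'$ yields $\IL \vdash t =_{\rho^*} t'$, and then the equality axiom $x =_\rho y \land A \rta A'$ applied to the atomic formula $w \in t$ gives $\IL \vdash w \in t \leftrightarrow w \in t'$ (using that symmetry of $=$ is derivable). Hence it suffices to establish the claim for the normal closed term $t'$.

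Next I would invoke the theorem stating that a normal closed term of star type is set-like, so $t'$ is built from blocks of the form $\mathfrak{s}q$ using the constant $\cup$, and argue by induction on this set-like structure. In the base case $t' = \mathfrak{s}q$: closedness of $t'$ forces $q$ to be closed, and the typing $\mathfrak{s}_\rho : \rho \rta \rho^*$ forces $q : \rho$, so the axiom $w \in \mathfrak{s}x \leftrightarrow w = x$ gives $\IL \vdash w \in t' \leftrightarrow w = q$, i.e.\ $n=1$. In the inductive step $t' = \cup\, t_1 t_2$ with $t_1, t_2$ set-like closed terms of type $\rho^*$, the induction hypothesis supplies closed terms $q_1,\ldots,q_k$ and $q_{k+1},\ldots,q_m$ of type $\rho$ with $\IL \vdash w \in t_1 \leftrightarrow w = q_1 \lor \ldots \lor w = q_k$ and $\IL \vdash w \in t_2 \leftrightarrow w = q_{k+1} \lor \ldots \lor w = q_m$; the axiom $w \in \cup xy \leftrightarrow w \in x \lor w \in y$ then yields $\IL \vdash w \in t' \leftrightarrow w = q_1 \lor \ldots \lor w = q_m$, which is the desired form.

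The genuinely load-bearing steps are the normalization/uniqueness input and the set-like structure theorem; once those are in hand, the rest is bookkeeping. The points that require a little care—and which I expect to be the only real obstacles—are verifying that normalization does not manufacture free variables (so that the extracted $q_i$ are honestly closed) and confirming that a set-like term headed by $\cup$ decomposes as $\cup\, t_1 t_2$ with set-like immediate subterms $t_1,t_2$ of star type, which is precisely what makes the induction well-founded. The logical manipulations—chaining the biconditionals and reassociating the disjunctions—are routine consequences of the $\ILp$ axioms and rules and need not be spelled out.
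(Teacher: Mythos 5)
Your proposal is correct and follows essentially the same route as the paper's proof: reduce to the normal form via ``reduction implies equality'' and the equality axioms, invoke the theorem that normal closed terms of star type are set-like, and then induct on the set-like structure using the membership axioms for $\mathfrak{s}$ and $\cup$. The paper states this argument more tersely, but the two proofs coincide step for step.
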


\begin{proof}
	Since reduction implies equality and due to the axioms of equality, we only need to show the case when $t$ is in normal form, that is, when $t$ is set-like. The proof follows by induction on the complexity of $t$.
	If $t$ is $\mathfrak{s}q$, by the axiom for $\mathfrak{s}$, we can take the term $q$. If $t$ is $\cup rs$, we apply the induction hypothesis. Let $r_1,\ldots,r_m$ be the terms associated to $r$ and $s_1,\ldots,s_l$ be the terms associated to $s$. Then, by the axiom for $\cup$:
	\begin{equation*}
		\IL \vdash w\in \cup rs \leftrightarrow w=r_1 \lor \ldots \lor w=r_m \lor w=s_1\lor \ldots \lor w=s_l
	\end{equation*}
\end{proof}


Given that terms are derived solely from constants and variables through the operation of application, we naturally achieve what is commonly referred to as \emph{combinatorial completeness}. 

\begin{theorem}[Combinatorial Completeness]
	For each term $t^\sigma$ and variable $x^\rho$ there exists a term of type $\rho \rta \sigma$, denoted by $\lambda x.t$, whose variables are those of $t$ except $x$, such that for all term $s$ of type $\rho$:
	
	\begin{equation*}
		\IL \vdash (\lambda x.t)s =_\sigma t[s/x]
	\end{equation*}
	where $t[s/x]$ is the term obtained replacing in $t$ every occurrence of $x$ by $s$.
\end{theorem}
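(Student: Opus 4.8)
The plan is to proceed exactly as in ordinary combinatory logic, defining the abstraction operator $\lambda x.(-)$ by recursion on the structure of the term $t$ and then verifying the defining equation by induction, using the conversions together with the Remark that reduction implies equality. Since the terms of $\Lstar$ are generated from constants and variables by application alone, there are only three cases to consider, and—crucially—the star constants $\mathfrak{s}_\sigma$, $\cup_\sigma$ and $\bigcup_{\sigma,\tau}$ require no special treatment: being constants, they fall under the constant case below. This inertness is exactly what allows the standard $\Sigma,\Pi$ argument to go through unchanged in the present setting.

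I would define $\lambda x.t$ as follows. If $t$ is the variable $x$ itself (of type $\rho$), set $\lambda x.x := \Sigma\,\Pi\,\Pi$ with the type subscripts chosen so that the result has type $\rho \rta \rho$; a direct check of the conversions gives $(\Sigma\Pi\Pi)s \succeq \Pi s(\Pi s) \succeq s$, so this is the identity combinator $I_\rho$. If $t$ is a constant or a variable distinct from $x$ (so that $x$ does not occur in $t$), set $\lambda x.t := \Pi\, t$, which has type $\rho \rta \sigma$ and satisfies $(\Pi t)s \succeq t = t[s/x]$. Finally, if $t$ is an application $t_1 t_2$, set $\lambda x.t := \Sigma\,(\lambda x.t_1)\,(\lambda x.t_2)$, appealing to the recursively defined abstractions on the strictly smaller subterms $t_1$ and $t_2$.

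With these definitions in hand, three things remain to be checked, all routine. First, the type annotations fit together in each clause; the only mildly fiddly point is fixing the subscripts of $\Sigma$ and $\Pi$ in the identity case, which I would record once and suppress thereafter. Second, the free-variable condition: an easy induction shows that $\lambda x.t$ contains exactly the variables of $t$ other than $x$, since each clause either discards $x$ (constant/variable case) or introduces only the fixed combinators $\Sigma,\Pi$ (identity and application cases) while inheriting the variables of the subterms. Third, the defining equation $(\lambda x.t)s =_\sigma t[s/x]$: in the two base cases it follows from the single conversion exhibited above, and in the application case the $\Sigma$-conversion gives $(\lambda x.t_1 t_2)s \succeq (\lambda x.t_1)s\,((\lambda x.t_2)s)$, whence the induction hypothesis yields $t_1[s/x]\,(t_2[s/x]) = (t_1 t_2)[s/x]$; converting each $\succeq$ into a provable equality via the Remark closes the argument.

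I do not expect a genuine obstacle here: the content is the standard bracket-abstraction construction, and the only points requiring vigilance are the bookkeeping of types and the observation that the star constants are inert under abstraction. The whole proof is therefore a structural induction whose single nontrivial step is the application clause.
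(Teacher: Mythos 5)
Your proposal is correct and is precisely the standard bracket-abstraction argument (identity combinator $\Sigma\Pi\Pi$ for the variable $x$, $\Pi t$ for atoms not containing $x$, and $\Sigma(\lambda x.t_1)(\lambda x.t_2)$ for applications) that the paper itself implicitly invokes when it remarks that combinatorial completeness follows from terms being built solely from constants and variables by application; your use of the conversions plus the remark that reduction implies equality, and your observation that this even yields $(\lambda x.t)s \succeq t[s/x]$, matches the paper's subsequent remark exactly. No gap here.
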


\begin{remark}
	In fact, we get $(\lambda x.t)s \succeq t[s/x]$.
\end{remark}

\begin{definition}
	A type is said to be \emph{end-star} if it is of the form $\rho_1 \rta \ldots \rta \rho_n \rta \tau^*$.
\end{definition}

For ease of reading, we denote $\mathfrak{s}x$ by $\{x\}$, $\cup xy$ by $x \cup y$ and $\bigcup xy$ by $\bigcup_{w \in x} yw$.

\vspace{.3cm}

Let $\sigma$ be the type $\rho_1 \rta \ldots \rta \rho_n \rta \tau^*$ (with $n\geq 0$) and $a,b$ be terms of type $\sigma$. The following abbreviations will also be useful:

\begin{equation*}
	a \sqsubseteq_{\sigma} b :\equiv \forall x_1^{\rho_1},\ldots,x_n^{\rho_n} (ax_1\cdots x_n \subseteq_{\tau^*} bx_1\cdots x_n)
\end{equation*}
where $a' \subseteq_{\tau^*} b' :\equiv \forall x (x \in a' \rta x \in b')$ for $a',b'$ of type $\tau^*$.

\begin{equation*}
	a \sqcup_{\sigma} b :\equiv \lambda x_1^{\rho_1},\ldots,x_n^{\rho_n}.ax_1\cdots x_n \cup bx_1\cdots x_n
\end{equation*}
\begin{equation*}
	\bigsqcup_{w \in F} fw :\equiv \lambda x_1^{\rho_1},\ldots,x_n^{\rho_n}.\bigcup_{w \in F} fwx_1\cdots x_n
\end{equation*}

$\ux \sqsubseteq \ux'$ denotes $\bigwedge_i x_i \sqsubseteq x'_i$, where $\ux$ (respectively $\ux'$) denotes the tuple of $x_i$ (respectively $x'_i$).

\begin{remark}
	We have $a \sqsubseteq_{\sigma} a \sqcup_{\sigma} b, b \sqsubseteq_{\sigma} a \sqcup_{\sigma} b$ and $\forall z \in F (fz \sqsubseteq_{\sigma} \bigsqcup_{w \in F} fw)$.
\end{remark}

\begin{remark}
	The relation $\sqsubseteq_{\tau^*}$ is simply $\subseteq_{\tau^*}$.
\end{remark}

\section{The herbrandized modified realizability}\label{hmr}

In this section, we define the new realizability notion (the \emph{herbrandized modified realizability}) within $\IL$ and prove the corresponding soundness and characterization theorems.

\begin{definition}
	Given a formula $A$, we define $A^{HR}$ as a formula, with the free variables of $A$, of the form:
	\begin{equation*}
		A^{HR} \equiv \exists \ux \,A_{HR}(\ux)
	\end{equation*}
	
	\noindent where $A_{HR}$ is a $\exists$-free formula. The definition is by induction on the complexity of $A$:
	
	If $A$ is atomic, $A^{HR} :\equiv A_{HR} :\equiv A$.
	
	If $A^{HR} \equiv \exists \ux \,A_{HR}(\ux)$ and $B^{HR} \equiv \exists \uu \,B_{HR}(\uu)$:
	
	\begin{itemize}
		\item[] $(A \lor B)^{HR} :\equiv \exists \ux,\uu (A_{HR}(\ux) \lor B_{HR}(\uu))$
		\item[] $(A \land B)^{HR} :\equiv \exists \ux,\uu (A_{HR}(\ux) \land B_{HR}(\uu))$
		\item[] $(A \rightarrow B)^{HR} :\equiv \exists \uU \forall \ux (A_{HR}(\ux) \rightarrow B_{HR}(\uU\ux))$
		
		\item[] $(\exists z \,A(z))^{HR} :\equiv \exists Z,\ux \exists z \in Z \,A_{HR}(z,\ux)$
		\item[] $(\forall z \,A(z))^{HR} :\equiv \exists \uX \forall z \,A_{HR}(z,\uX z)$
		\item[] $(\exists z \in t\,A(z))^{HR} :\equiv \exists \ux \exists z \in t\,A_{HR}(z,\ux)$
		\item[] $(\forall z \in t\,A(z))^{HR} :\equiv \exists \ux \forall z \in t\,A_{HR}(z,\ux)$
		
	\end{itemize}
	
\end{definition}

\begin{remark}
	If $A$ is an $\exists$-free formula, then $A^{HR} \equiv A_{HR} \equiv A$.
\end{remark}

\begin{remark}
	By induction on the logical structure of the formulas, it can be shown that, for any formula $A$, the variables $\ux$ in $A^{HR} \equiv \exists \ux \,A_{HR}(\ux)$ are of end-star type.
\end{remark}

\begin{theorem}[Monotonicity]
	For any formula $A$ with $A^{HR} \equiv \exists \ux \,A_{HR}(\ux)$:
	\begin{equation*}
		\ux \sqsubseteq \ux'  \land A_{HR}(\ux) \rta  A_{HR}(\ux')
	\end{equation*}
\end{theorem}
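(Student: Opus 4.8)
The plan is to prove the Monotonicity Theorem by induction on the logical structure of the formula $A$, following exactly the inductive clauses used to define $A^{HR}$. For the base case, when $A$ is atomic, there are no witnessing variables $\ux$, so $A_{HR}$ is just $A$ and the implication $A_{HR}(\ux)\rta A_{HR}(\ux')$ holds trivially (the hypothesis $\ux\sqsubseteq\ux'$ being vacuous). The induction hypothesis will be that the monotonicity property already holds for the immediate subformulas, say $A$ with $A^{HR}\equiv\exists\ux\,A_{HR}(\ux)$ and $B$ with $B^{HR}\equiv\exists\uu\,B_{HR}(\uu)$.

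I would then check each connective in turn. The cases of $\lor$ and $\land$ are routine: if $\ux\sqsubseteq\ux'$ and $\uu\sqsubseteq\uu'$ then from $A_{HR}(\ux)$ we get $A_{HR}(\ux')$ and similarly for $B$, and the connective is preserved. The existential-type cases $\exists z\,A(z)$, $\exists z\in t\,A(z)$, and $\forall z\in t\,A(z)$ are handled by applying the induction hypothesis to the inner matrix $A_{HR}(z,\ux)$ while leaving the bounded quantifier over $z$ untouched; here one must also note, in the clause for $\exists z\,A(z)$, that if $Z\sqsubseteq Z'$ (i.e.\ $Z\subseteq_{\tau^*}Z'$ by the final Remark) then $\exists z\in Z$ implies $\exists z\in Z'$, so the set-valued witness $Z$ behaves monotonically. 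For $\forall z\,A(z)$, with witness $\uX$ of higher type, the hypothesis $\uX\sqsubseteq\uX'$ unfolds by definition of $\sqsubseteq_\sigma$ to $\uX z\sqsubseteq\uX' z$ for every $z$, and one applies the induction hypothesis pointwise.

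The implication case is the one I expect to be the main obstacle and deserves the most care. Here $(A\rta B)^{HR}\equiv\exists\uU\,\forall\ux\,(A_{HR}(\ux)\rta B_{HR}(\uU\ux))$, and we must show that $\uU\sqsubseteq\uU'$ together with the matrix for $\uU$ yields the matrix for $\uU'$. Fix an arbitrary $\ux$ and assume $A_{HR}(\ux)$; from the hypothesis for $\uU$ we obtain $B_{HR}(\uU\ux)$. Now $\uU\sqsubseteq\uU'$ means $\uU\,\ux\sqsubseteq\uU'\,\ux$ (this is precisely the content of $\sqsubseteq_\sigma$ for the end-star type of $\uU$, instantiated at the arguments $\ux$), so I can invoke the induction hypothesis for $B$ with witnesses $\uU\ux\sqsubseteq\uU'\ux$ to pass from $B_{HR}(\uU\ux)$ to $B_{HR}(\uU'\ux)$. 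The delicate point is bookkeeping: the antecedent $A_{HR}(\ux)$ appears contravariantly but is \emph{not} weakened here (we use the \emph{same} $\ux$ on both sides), so monotonicity is needed only on the consequent side, and the apparent contravariance of implication does not cause trouble. Throughout, the only external facts required are the definition of $\sqsubseteq_\sigma$ via pointwise $\subseteq_{\tau^*}$ and the Remark identifying $\sqsubseteq_{\tau^*}$ with $\subseteq_{\tau^*}$, both stated above.
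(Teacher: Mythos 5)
Your proof is correct and follows essentially the same route as the paper's own argument: induction on the logical structure of $A$, with the bounded-quantifier and propositional cases dispatched by the induction hypothesis, the $\forall z$ and $\exists z$ cases using that $\uX \sqsubseteq \uX'$ gives $\uX z \sqsubseteq \uX' z$ (respectively $Z \sqsubseteq Z'$ gives membership preservation), and the implication case handled by fixing $\ux$, obtaining $B_{HR}(\uU\ux)$, and applying the induction hypothesis for $B$ via $\uU\ux \sqsubseteq \uU'\ux$. Your observation that the antecedent is not weakened (so no contravariance issue arises) is exactly the point implicit in the paper's treatment of that case.
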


\begin{proof}
	The proof is by induction on the logical structure of the formula. For atomic formulas the result is trivial. The induction step in the cases $A \land B$, $A \lor B$, $\exists z \in t\,A(z)$ and $\forall z \in t\,A(z)$ follows from the induction hypothesis.
	
	\begin{itemize}
		\item[]Case $A \to B$
		
		We assume $\uU \sqsubseteq \uU'$ and $\forall \ux (A_{HR}(\ux) \rightarrow B_{HR}(\uU\ux))$. We need to show that $\forall \ux (A_{HR}(\ux) \rightarrow B_{HR}(\uU'\ux))$. Take $\ux$ such that $A_{HR}(\ux)$. By hypothesis, we get $B_{HR}(\uU\ux)$. We note that from $\uU \sqsubseteq \uU'$ we get $\uU\ux \sqsubseteq \uU'\ux$. By induction hypothesis, we have that $B_{HR}(\uU'\ux)$. Thus $\forall \ux (A_{HR}(\ux) \rightarrow B_{HR}(\uU'\ux))$.
		
		\item[]Case $\forall z\, A(z)$
		
		We assume $\uX \sqsubseteq \uX'$ and $\forall z \,A_{HR}(z,\uX z)$. We need to show $\forall z \,A_{HR}(z,\uX' z)$. This follows from the induction hypothesis, noting that $\uX \sqsubseteq \uX'$ implies $\uX z \sqsubseteq \uX'z$.
		
		\item[]Case $\exists z\, A(z)$
		
		We assume $Z \sqsubseteq Z', \ux \sqsubseteq \ux'$ and $\exists z \in Z \,A_{HR}(z,\ux)$. We need to show $\exists z \in Z' \,A_{HR}(z,\ux')$. Take $z$ such that $z\in Z$ and $A_{HR}(z,\ux)$. Since $Z \sqsubseteq Z'$, we have that $z\in Z'$. From the induction hypothesis we get $A_{HR}(z,\ux')$. Thus $\exists z \in Z' \,A_{HR}(z,\ux')$.
	\end{itemize}
	
\end{proof}


Consider the following Choice and Independence of Premises Principles:
\begin{equation*}
	\AC\text{: } \forall x^\rho \exists y^\sigma \,A(x,y) \rta \exists f^{\rho \rta \sigma^*} \forall x \exists y \in fx \,A(x,y)
\end{equation*}
and
\begin{equation*}
	\mathrm{IP}^*_{\not\exists} \text{: } (B(x) \rightarrow \exists y \,A(y)) \rightarrow \exists w (B(x) \rightarrow \exists y \in w \,A(y))
\end{equation*}
where $B$ is an $\exists$-free formula.


The theory $\IL+ \AC + \mathrm{IP}^*_{\not\exists} $ proves the following Collection Principle:

\begin{lemma}[Collection] \label{Coll}
	\begin{equation*}
		\IL+ \AC + \mathrm{IP}^*_{\not\exists} \vdash \forall \ux \in \uy \exists \uz\, A(\ux,\uz) \rta \exists \uw \forall \ux \in \uy \exists \uz \in \uw\, A(\ux,\uz)
	\end{equation*}
\end{lemma}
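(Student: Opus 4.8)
The plan is to derive the Collection Principle in three stages: first rewrite the bounded premise as an implication with an $\exists$-free antecedent, then use $\mathrm{IP}^*_{\not\exists}$ followed by $\AC$ to extract a finite-set-valued choice functional, and finally collapse that functional into a single global bound by taking a union over the finite ranges of the variables. Throughout, the key structural fact is that $\uy$ and the values of the choice functional are star-typed, hence finite sets, so a finite union suffices to dominate all of them.

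First I would unfold the bounded universal quantifier. By axiom (ii) of $\IL$, the premise $\forall \ux \in \uy\, \exists \uz\, A(\ux,\uz)$ is equivalent to $\forall \ux\,(\ux \in \uy \rta \exists \uz\, A(\ux,\uz))$. For a fixed $\ux$ the antecedent $\ux \in \uy$ is a conjunction of membership atoms, hence $\exists$-free, so I may apply $\mathrm{IP}^*_{\not\exists}$ (in its tuple form) to obtain $\ux \in \uy \rta \exists \uz \in \uw\, A(\ux,\uz)$ for some $\uw$; generalizing gives
\[ \forall \ux\, \exists \uw\,(\ux \in \uy \rta \exists \uz \in \uw\, A(\ux,\uz)). \]
Now $\AC$ applies directly to this $\forall\exists$ statement (the matrix being arbitrary is no obstacle), producing a tuple of functionals $F$ with
\[ \forall \ux\, \exists \uw \in F\ux\,(\ux \in \uy \rta \exists \uz \in \uw\, A(\ux,\uz)). \]

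For the final collection step I would set
\[ \uv \;:\equiv\; \bigsqcup_{\ux \in \uy}\ \bigsqcup_{\uw \in F\ux} \uw, \]
and claim this $\uv$ witnesses the conclusion. Indeed, fix $\ux$ with $\ux \in \uy$; the displayed $\AC$-consequence yields some $\uw \in F\ux$ for which, discharging the antecedent $\ux \in \uy$, we get $\exists \uz \in \uw\, A(\ux,\uz)$, say $\uz \in \uw$ and $A(\ux,\uz)$. Since $\ux \in \uy$ and $\uw \in F\ux$, the property $\forall z \in F(fz \sqsubseteq \bigsqcup_{w \in F} fw)$ of $\bigsqcup$, applied twice (once to the inner union with $f$ the identity, once to the outer one) and composed by transitivity of $\sqsubseteq$, gives $\uw \sqsubseteq \uv$; as $\sqsubseteq$ on star types is just $\subseteq$, we conclude $\uz \in \uv$, whence $\exists \uz \in \uv\, A(\ux,\uz)$. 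As $\ux \in \uy$ was arbitrary this establishes $\forall \ux \in \uy\, \exists \uz \in \uv\, A(\ux,\uz)$, i.e. the desired $\exists \uw\, \forall \ux \in \uy\, \exists \uz \in \uw\, A$.

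The main obstacle I anticipate is bookkeeping rather than conceptual, namely the tuples. Both $\mathrm{IP}^*_{\not\exists}$ and $\AC$ are stated for single variables, so I must first justify their tuple versions, and the union defining $\uv$ is really an iterated $\bigsqcup$ over the several components of $\ux \in \uy$ and of $\uw \in F\ux$, with the monotonicity and transitivity of $\sqsubseteq$ verified componentwise. It is worth noting that no appeal to nonemptiness of star types is needed, since at the crucial point a witness $\uw \in F\ux$ is already supplied by the $\AC$-consequence \emph{before} the antecedent $\ux \in \uy$ is discharged.
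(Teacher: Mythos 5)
The paper states Lemma \ref{Coll} without any proof, so there is no official argument to compare against; your derivation is the natural one and it is correct. Your chain --- unfold the bounded universal via axiom (ii), apply $\mathrm{IP}^*_{\not\exists}$ with the $\exists$-free antecedent $\ux \in \uy$, apply $\AC$, then merge into the single bound $\uv := \bigsqcup_{\ux\in\uy}\bigsqcup_{\uw\in F\ux}\uw$ using the remark $\forall z \in F\,(fz \sqsubseteq \bigsqcup_{w\in F} fw)$ together with transitivity of $\sqsubseteq$ and the fact that $\sqsubseteq$ on star types is $\subseteq$ --- goes through in $\IL$, and your closing observation that the witness $\uw \in F\ux$ is produced \emph{before} the antecedent $\ux\in\uy$ is discharged (so no nonemptiness of star-typed objects is needed) is exactly the right point of care. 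The only debts you leave are the ones you flag yourself: the tuple forms of $\AC$ and $\mathrm{IP}^*_{\not\exists}$, obtained by iterating the single-variable principles, commuting existentials, and merging bounds with $\cup$; this is routine, and the paper itself uses these tuple forms tacitly (e.g.\ in the Soundness and Characterization proofs), so nothing essential is missing.
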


\begin{theorem}[Soundness]
	Let $A$ be a formula with free variables $\ua$. Let $T_{\not\exists}$ be a set of $\exists$-free formulas. If
	\begin{equation*}
		\IL + \AC + \mathrm{IP}^*_{\not\exists} + T_{\not\exists} \vdash A(\ua)
	\end{equation*}
	
	then there exist closed terms $\ut$ such that
	\begin{equation*}
		\IL +T_{\not\exists} \vdash  A_{HR}(\ua,\ut\ua)
	\end{equation*}
\end{theorem}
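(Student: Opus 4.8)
The plan is to prove the soundness theorem by induction on the length of the derivation in $\IL + \AC + \mathrm{IP}^*_{\not\exists} + T_{\not\exists}$. For each axiom and rule I would exhibit explicit closed realizing terms and verify, inside $\IL + T_{\not\exists}$, that the $\exists$-free matrix $A_{HR}$ is satisfied by those terms applied to the free variables $\ua$. The overall structure mirrors the standard soundness proof for modified realizability, but with the crucial difference that existential witnesses are now \emph{finite sets} of candidates (via the star types), so at each step I must produce set-valued terms and show membership rather than equality. The \emph{Monotonicity} theorem proved above will be the workhorse: it allows me, whenever I need to combine or enlarge witness sets, to pass from a realizer of a subformula to a realizer of a larger set without losing the truth of $A_{HR}$.

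First I would dispatch the propositional and quantifier axioms of $\ILp$. For the logical axioms such as $A \to A \land A$ or $A \lor A \to A$, the realizing terms are built by combinatorial completeness (the $\lambda$-abstraction established above), duplicating or merging the witness tuples; for $A \to A \lor B$ one injects the witness and uses $\mathfrak{s}$ to form singleton sets where a star-type witness is demanded. The quantifier axioms $\forall x\, A \to A[t/x]$ and $A[t/x] \to \exists x\, A$ require care: the existence introduction axiom is exactly where the singleton constructor $\mathfrak{s}$ enters, since the realizer of $\exists z\,A(z)$ carries a set $Z$ of candidates and here I supply $\{t\}$. Then I would handle the rules. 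Modus ponens and the transitivity and $\lambda$-style rules compose realizing terms by application; the rules introducing $\forall$ on the right and $\exists$ on the left (where $x$ is not free in $B$) are handled by abstracting or by the bounded-union construction $\bigsqcup$, again invoking Monotonicity to absorb the witnesses collected across the bound variable.

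The substantive cases are the three nonlogical principles. For $\AC$, the translation of the premise $\forall x \exists y\,A(x,y)$ supplies, for each $x$, a set $Y$ with $\exists y \in Y\,A_{HR}(y,\ldots)$; I would use $\lambda$-abstraction to turn the realizer into a function $f$ of type $\rho \to \sigma^*$ and read off the conclusion directly, so $\AC$ is realized essentially by the identity rearrangement of witnesses. For $\mathrm{IP}^*_{\not\exists}$, the hypothesis that $B$ is $\exists$-free means $B^{HR} \equiv B$ has no witness tuple, so a realizer of the premise $(B \to \exists y\,A(y))$ is already a term producing the witness set $w$, which is exactly what the conclusion demands; this is why the independence-of-premises principle is realizable only for $\exists$-free $B$. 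The genuinely delicate point is the \emph{implication} rule and, underpinning everything, the \emph{Collection} lemma: combining a realizer of $\forall \ux \in \uy\,\exists \uz\,A$ into a single uniform bound $\uw$ requires iterating the big-union $\bigcup$ over the finite set $\uy$ and then invoking Monotonicity to guarantee that the pooled set $\uw$ still witnesses each instance. I expect the main obstacle to be bookkeeping in the contraction/implication cases, where tuples of end-star type must be merged with $\sqcup$ and the functionals $\uU$ must be defined so that $A_{HR}(\ux) \to B_{HR}(\uU\ux)$ holds uniformly; verifying the $\exists$-free matrix survives these merges is where Monotonicity is indispensable and where the proof requires the most attention.
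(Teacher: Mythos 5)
Your proposal is correct and follows essentially the same route as the paper: induction on the derivation, with explicit closed realizing terms for each axiom and rule --- identity/projection terms for the structural axioms, the singleton constructor $\mathfrak{s}$ for existential introduction and for the (essentially self-realizing) principles $\AC$ and $\mathrm{IP}^*_{\not\exists}$, and $\sqcup$/$\bigsqcup$ merges justified by Monotonicity in the contraction and $\exists$-left cases. One small correction: the Collection Lemma itself is not invoked in the soundness proof (nor could it be, since it is derived from $\AC + \mathrm{IP}^*_{\not\exists}$, which are not available in the verifying theory $\IL + T_{\not\exists}$); the pooling you describe is carried out purely at the term level via $\bigsqcup$ plus Monotonicity, and Collection is only needed later, in the Characterization Theorem.
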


\begin{proof}\label{sound}
	Fix $A^{HR} \equiv \exists \ux \,A_{HR}(\ux)$, $B^{HR} \equiv \exists \uu \,B_{HR}(\uu)$ and $C^{HR} \equiv \exists \up\,C_{HR}(\up)$.
	
	We use the tuple notation combined with the lambda notation as per the following example: if $\ux$ denotes the tuple of $x_i$, then $\lambda \ux,\uy.\ux$ denotes the tuple of $\lambda \ux,\uy.x_i$.
	
	The proof is by induction on the deduction of the formula.
	
	\begin{itemize}
		
		\item[] Case $A \rta A \land A$
		
		We need terms $\ut_{\uX'},\ut_{\uX''}$ such that:
		\begin{equation*}
			\forall \ux (A_{HR}(\ux) \rta A_{HR}(\ut_{\uX'}\ux) \land A_{HR}(\ut_{\uX''}\ux))
		\end{equation*}
		
		We take $\ut_{\uX'}:=\ut_{\uX''}:=\lambda \ux.\ux$.
		
		\item[] Case $A \lor A \rta A$
		
		We need terms $\ut_{\uX''}$ such that:
		\begin{equation*}
			\forall \ux,\ux' (A_{HR}(\ux) \lor A_{HR}(\ux') \rta A_{HR}(\ut_{\uX''}\ux\ux'))
		\end{equation*}
		
		We take $\ut_{\uX''}:=\lambda \ux,\ux'.\ux \sqcup \ux'$.
		
		\item[] Case $A \land B \rta A$
		
		We need terms $\ut_{\uX'}$ such that:
		\begin{equation*}
			\forall \ux,\uu (A_{HR}(\ux) \land B_{HR}(\uu) \rta A_{HR}(\ut_{\uX'}\ux\uu))
		\end{equation*}
		
		We take $\ut_{\uX'}:=\lambda \ux,\uu.\ux$.
		
		\item[] Case $A \rta A \lor B$
		
		We need terms $\ut_{\uX'},\ut_{\uU}$ such that:
		\begin{equation*}
			\forall \ux (A_{HR}(\ux) \rta A_{HR}(\ut_{\uX'}\ux) \lor B_{HR}(\ut_{\uU}\ux))
		\end{equation*}
		
		We take $\ut_{\uX'}:= \lambda \ux.\ux$. For $\ut_{\uU}$ we take an arbitrary closed term of adequate type, which exists due to combinatorial completeness and the existence of a constant of type $G$.
		
		\item[] Case $A \land B \rta B \land A$
		
		We need terms $\ut_{\uX'},\ut_{\uU'}$ such that:
		\begin{equation*}
			\forall \ux,\uu (A_{HR}(\ux) \land B_{HR}(\uu) \rta B_{HR}(\ut_{\uU'}\ux\uu) \land A_{HR}(\ut_{\uX'}\ux\uu))
		\end{equation*}
		
		We take $\ut_{\uX'}:= \lambda \ux,\uu.\ux, \quad\ut_{\uU'}:= \lambda \ux,\uu.\uu$.
		
		\item[] The case $A \lor B \rta B \lor A$ is analogous to the above.
		
		\item[] Case $\bot \rta A$
		
		We need terms $\ut_{\ux}$ such that:
		\begin{equation*}
			\bot \rta A_{HR}(\ut_{\ux})
		\end{equation*}
		
		We take arbitrary closed $\ut_{\ux}$.
		
		\item[] Case $\forall z\, A \rta A(t)$
		
		We need terms $\ut_{\uX''}$ such that:
		\begin{equation*}
			\forall \uX' (\forall z\, A_{HR}(z,\uX' z) \rta A_{HR}(t,\ut_{\uX''}\uX'))
		\end{equation*}
		
		We take $\ut_{\uX''}:=\lambda \uX'.\uX't$.
		
		\item[] Case $A(t) \rta \exists z\, A$
		
		We need terms $t_{Z'},\ut_{\uX'}$ such that:
		\begin{equation*}
			\forall \ux (A_{HR}(t,\ux) \rta \exists z\in t_{Z'}\ux\, A_{HR}(z,\ut_{\uX'}\ux))
		\end{equation*}
		
		We take $t_{Z'}:=\lambda \ux.\{t\}, \quad \ut_{\uX'}:= \lambda \ux.\ux$.
		
		\item[] Case
		\begin{tabular}{c}
			$A, A \rightarrow B$\\
			\hline
			$B$
		\end{tabular}
		
		By induction hypothesis, we obtain closed terms $\ut_{\ux},\ut_{\uU}$ such that:
		\begin{equation*}
			A_{HR}(\ut_{\ux})
		\end{equation*}
		\begin{equation*}
			\forall \ux (A_{HR}(\ux) \rightarrow B_{HR}(\ut_{\uU}\ux))
		\end{equation*}
		
		We need terms $\ut_{\uu}$ such that
		\begin{equation*}
			B_{HR}(\ut_{\uu})
		\end{equation*}
		
		We take $\ut_{\uu} := \ut_{\uU}\ut_{\ux}$.
		
		\item[] Case
		\begin{tabular}{c}
			$A \rightarrow B, B \rightarrow C$\\
			\hline
			$A \rightarrow C$
		\end{tabular}
		
		By induction, we have closed terms $\ut_{\uU},\ut_{\uP}$ such that:
		\begin{equation*}
			\forall \ux (A_{HR}(\ux) \rightarrow B_{HR}(\ut_{\uU}\ux))
		\end{equation*}
		\begin{equation*}
			\forall \uu (B_{HR}(\uu) \rightarrow C_{HR}(\ut_{\uP}\uu))
		\end{equation*}
		
		We need terms $\us_{\uP}$ such that:
		\begin{equation*}
			\forall \ux (A_{HR}(\ux) \rightarrow C_{HR}(\us_{\uP}\ux))
		\end{equation*}
		
		We take $\us_{\uP} := \lambda \ux.\ut_{\uP}(\ut_{\uU}\ux)$.
		
		\item[] Cases
		\begin{tabular}{c}
			$A \land B \rightarrow C$\\
			\hline
			$A \rightarrow (B \rightarrow C)$
		\end{tabular}
		,
		\begin{tabular}{c}
			$A \rightarrow (B \rightarrow C)$ \\
			\hline
			$A \land B \rightarrow C$
		\end{tabular}
		
		To realize $A \land B \rta C$ we need closed terms $\ut_{\uP}$ such that:
		\begin{equation*}
			\forall \ux,\uu (A_{HR}(\ux) \land B_{HR}(\uu) \rta C_{HR}(\ut_{\uP}\ux\uu))
		\end{equation*}
		
		To realize $A \rta (B \rta C)$ we need closed terms $\ut_{\uP}$ such that:
		\begin{equation*}
			\forall \ux (A_{HR}(\ux) \rta \forall \uu(B_{HR}(\uu) \rta C_{HR}(\ut_{\uP}\ux\uu)))
		\end{equation*}
		
		These two formulas are equivalent in $\IL$, therefore the terms which realize $A \land B \rightarrow C$ are exactly those which realize $A \rightarrow (B \rightarrow C)$.
		
		\item[] Case
		\begin{tabular}{c}
			$A \rightarrow B$\\
			\hline
			$C \lor A \rightarrow C \lor B$
		\end{tabular}
		
		By induction we have terms $\ut_{\uU}$ such that
		\begin{equation*}
			\forall \ux (A_{HR}(\ux) \rightarrow B_{HR}(\ut_{\uU}\ux))
		\end{equation*}
		
		We need terms $\ut_{\uP'},\ut_{\uU'}$ such that
		\begin{equation*}
			\forall \up,\ux (C_{HR}(\up) \lor A_{HR}(\ux) \rta C_{HR}(\ut_{\uP'}\up\ux) \lor B_{HR}(\ut_{\uU'}\up\ux)
		\end{equation*}
		
		We take $\ut_{\uP'}:=\lambda \up,\ux.\up,\quad\ut_{\uU'}:= \lambda \up,\ux.\ut_{\uU}\ux$.
		
		\item[] Case
		\begin{tabular}{c}
			$B \rightarrow A$\\
			\hline
			$B \rightarrow \forall z\, A$
		\end{tabular}
		where $z$ is not free in $B$
		
		By induction, we have terms $\ut_{\uX}$ such that
		\begin{equation*}
			\forall \uu (B_{HR}(\uu) \rta A_{HR}(z,\ut_{\uX}z\uu ))
		\end{equation*}
		
		We need $\us_{\uX}$ such that
		\begin{equation*}
			\forall \uu (B_{HR}(\uu) \rta \forall z\, A_{HR}(z,\us_{\uX}\uu z))
		\end{equation*}
		
		We take $\us_{\uX} := \lambda \uu, z.\ut_{\uX}z\uu$.
		
		\item[] Case
		\begin{tabular}{c}
			$A \rightarrow B$\\
			\hline
			$\exists z\, A \rightarrow B$
		\end{tabular}
		where $z$ is not free in $B$
		
		By induction we have terms $\ut_{\uU}$ such that
		\begin{equation*}
			\forall \ux (A_{HR}(z,\ux) \rta B_{HR}(\ut_{\uU}z\ux))
		\end{equation*}
		
		We need terms $\us_{\uU}$ such that
		\begin{equation*}
			\forall z',\ux (\exists z \in z'\, A_{HR}(z,\ux) \rta B_{HR}(\us_{\uU}z'\ux))
		\end{equation*}
		
		We take $\us_{\uU} := \lambda z',\uu.\bigsqcup_{z\in z'}\ut_{\uU}z\ux$.
		
		\item[] The axioms for $=$, $\Pi$ and $\Sigma$, as well as the axioms for the star types, are $\exists$-free formulas and therefore don't need terms.
		
		\item[] Case $\forall z\in t\, A	(z) \rta \forall z (z \in t \rta A(z))$
		
		We need terms $\ut_{\uX'}$ such that
		\begin{equation*}
			\forall \ux (\forall z\in t\, A_{HR}(z,\ux) \rta \forall z(z\in t \rta A_{HR}(z,\ut_{\uX'}\ux z)))
		\end{equation*}
		
		We take $\ut_{\uX'} := \lambda \ux,z.\ux$.
		
		\item[] Case $\forall z\in t\, A	(z) \leftarrow \forall z (z \in t \rta A(z))$
		
		We need terms $\ut_{\uX}$ such that
		\begin{equation*}
			\forall \uX' (\forall z (z\in t \rta A_{HR}(z,\uX'z)) \rta \forall z\in t\, A_{HR}(z,\ut_{\uX}\uX'))
		\end{equation*}
		
		We take $\ut_{\uX} := \lambda\uX'.\bigsqcup_{z\in t} \uX'z$.
		
		\item[] Case $\exists z \in t\, A(z) \rta \exists z (z \in t \land A(z))$
		
		We need terms $\ut_{\uX}$ and $t_{Z}$ such that
		\begin{equation*}
			\forall \ux (\exists z \in t \, A_{HR}(z,\ux) \rta \exists z \in t_{Z}\ux (z\in t \land A_{HR}(z,\uX\ux)))
		\end{equation*}
		
		We take $t_{Z} := \lambda \ux.t,\quad \ut_{\uX} := \lambda \ux.\ux$.
		
		\item[] Case $\exists z \in t\, A(z) \leftarrow \exists z (z \in t \land A(z))$
		
		We need terms $\ut_{\uX}$ such that
		\begin{equation*}
			\forall z',\ux (\exists z \in z' (z\in t \land A_{HR}(z,\ux)) \rta \exists z \in t\, A_{HR}(z,\ut_{\uX} z'\ux))
		\end{equation*}
		
		We take $\ut_{\uX} := \lambda z',\ux.\ux$.
		
		\item[] Case $\AC\text{: } \forall z \exists w \,A(z,w) \rta \exists W \forall z \exists w \in Wz \,A(z,w)$
		
		We need terms $t_{W''}, \ut_{\uX''}$ such that:
		\begin{multline*}
			\forall W',\uX' (\forall z \exists w \in W'z\, A_{HR}(z,w,\uX'z) \rta\\ \exists W \in t_{W''}W'\uX' \forall z' \exists w' \in Wz'\, A_{HR}(z',w',\ut_{\uX''}W'\uX'z'))
		\end{multline*}
		
		We take $t_{W''} := \lambda W',\uX'.\{W'\}, \quad \ut_{\uX''} := \lambda W',\uX'.\uX'$.
		
		\item[] Case $\mathrm{IP}^*_{\not\exists} \text{: } (A \rightarrow \exists w \,B(w)) \rightarrow \exists z (A \rightarrow \exists w \in z \,B(w))$ 
		where $A$ is an $\exists$-free formula.
		
		We need terms $t_{Z'},\ut_{\uU}$ such that
		\begin{equation*}
			\forall  w' ,\uu ((A \rta \exists w \in  w' \, B_{HR}(w,\uu)) \rta \exists z \in t_{Z'} w' \uu (A \rta \exists w \in z\, B_{HR}(w,\ut_{\uU} w' \uu)))
		\end{equation*}
		
		We take $t_{Z'}:= \lambda  w' ,\uu.\{ w' \},\quad \ut_{\uU}:= \lambda  w' ,\uu.\uu$.
		
		\item[] The formulas in $T_{\not\exists}$ verify themselves.
	\end{itemize}
\end{proof}

\begin{theorem}[Characterisation Theorem] \label{carTh}
	For any formula $A$:
	\begin{equation*}
		\IL + \AC + \mathrm{IP}^*_{\not\exists} \vdash A \leftrightarrow A^{HR}
	\end{equation*}
\end{theorem}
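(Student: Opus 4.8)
The plan is to argue by induction on the logical structure of $A$, using the induction hypotheses $A \leftrightarrow \exists \ux\, A_{HR}(\ux)$ and $B \leftrightarrow \exists \uu\, B_{HR}(\uu)$ to reduce each case to a purely logical equivalence between $A^{HR}$ and the formula read off from the defining clause of $(\cdot)^{HR}$, all inside $\IL + \AC + \mathrm{IP}^*_{\not\exists}$. The base case (atomic $A$) is immediate since then $A^{HR}\equiv A$. For $\land$, $\lor$ and for the existential quantifiers $\exists z$ and $\exists z\in t$, the required equivalences are standard intuitionistic manipulations — distributing existentials over $\land,\lor$, and commuting two existentials (one bounded) — so they go through in $\IL$ alone; for $\exists z\,A(z)$ one only additionally uses the singleton $\{z\}$ to witness the outer set variable $Z$ in $\exists Z,\ux\,\exists z\in Z\,A_{HR}(z,\ux)$.

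The recurring device for the remaining cases is the following \emph{collapse} of a finite set of candidate realizers into one. Whenever I have reached a bounded existential $\exists\uu\in\uw\,C_{HR}(\uu)$ over a set-like $\uw$, the Monotonicity Theorem lets me replace it by the single witness $\bigsqcup_{\uu\in\uw}\uu$: each $\uu\in\uw$ satisfies $\uu\sqsubseteq\bigsqcup_{\uu'\in\uw}\uu'$, so $\exists\uu\in\uw\,C_{HR}(\uu)\rta C_{HR}(\bigsqcup_{\uu\in\uw}\uu)$, and the resulting realizer has the same (end-star) type as $\uu$. In each of the three nontrivial cases $A\rta B$, $\forall z\,A(z)$ and $\forall z\in t\,A(z)$ the direction $A^{HR}\rta A$ is the easy one, provable in $\IL$ by instantiating the witness supplied by $A^{HR}$; the direction $A\rta A^{HR}$ is where $\AC$, $\mathrm{IP}^*_{\not\exists}$ and the collapse enter.

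For $\forall z\,A(z)$ the induction hypothesis gives $\forall z\,\exists\ux\,A_{HR}(z,\ux)$; applying $\AC$ yields $\uX$ with $\forall z\,\exists\ux\in\uX z\,A_{HR}(z,\ux)$, and putting $\uX' := \lambda z.\bigsqcup_{\ux\in\uX z}\ux$ the collapse gives $\forall z\,A_{HR}(z,\uX'z)$, which is $(\forall z\,A(z))^{HR}$. For $\forall z\in t\,A(z)$ the same recipe works with $\AC$ replaced by the Collection Lemma (Lemma~\ref{Coll}): from $\forall z\in t\,\exists\ux\,A_{HR}(z,\ux)$ it produces $\exists\uw\,\forall z\in t\,\exists\ux\in\uw\,A_{HR}(z,\ux)$, and collapsing over the single set $\uw$ delivers the \emph{uniform} witness $\bigsqcup_{\ux\in\uw}\ux$ with $\forall z\in t\,A_{HR}(z,\bigsqcup_{\ux\in\uw}\ux)$, as required for $(\forall z\in t\,A(z))^{HR}$.

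The main obstacle is the implication case. Here the induction hypothesis reduces $A\rta B$ to $\exists\ux\,A_{HR}(\ux)\rta\exists\uu\,B_{HR}(\uu)$, equivalently $\forall\ux\,(A_{HR}(\ux)\rta\exists\uu\,B_{HR}(\uu))$. Since $A_{HR}(\ux)$ is $\exists$-free, $\mathrm{IP}^*_{\not\exists}$ applies to the matrix and gives $\forall\ux\,\exists\uw\,(A_{HR}(\ux)\rta\exists\uu\in\uw\,B_{HR}(\uu))$; collapsing the inner bounded existential yields $\forall\ux\,\exists\uw'\,(A_{HR}(\ux)\rta B_{HR}(\uw'))$ with $\uw'$ of the type of $\uu$. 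Now $\AC$ produces $\uV$ with $\forall\ux\,\exists\uw'\in\uV\ux\,(A_{HR}(\ux)\rta B_{HR}(\uw'))$, and a final collapse — setting $\uU := \lambda\ux.\bigsqcup_{\uw'\in\uV\ux}\uw'$ and invoking Monotonicity once more — gives $\forall\ux\,(A_{HR}(\ux)\rta B_{HR}(\uU\ux))$, i.e. $(A\rta B)^{HR}$. The delicate point is precisely this interleaving: $\mathrm{IP}^*_{\not\exists}$ first extracts a finite bound on the consequent's witness from the $\exists$-free hypothesis, $\AC$ then turns the $\ux$-indexed bounds into a single functional, and each step must be followed by a monotonicity-driven union to recover a genuine (set-valued) realizer rather than a mere bounded existential. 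I also note that $\AC$, $\mathrm{IP}^*_{\not\exists}$, Collection and Monotonicity are stated for single variables but are used here on tuples, which is justified by iterating them componentwise.
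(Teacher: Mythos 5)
Your proposal is correct and follows essentially the same route as the paper's own proof: induction on the structure of $A$, with the easy cases handled by logical rules, and the three hard left-to-right directions ($A \rta B$, $\forall z\,A(z)$, $\forall z \in t\,A(z)$) obtained by the same interleaving of $\mathrm{IP}^*_{\not\exists}$, $\AC$ (or Collection for the bounded universal), and monotonicity-driven unions $\bigsqcup$ to collapse bounded existentials into single realizers. Your explicit isolation of the ``collapse'' device and the remark about componentwise treatment of tuples are just presentational refinements of what the paper does implicitly.
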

\begin{proof}

	The proof is by induction on the logical structure of the formula. The result is immediate for $\exists$-free formulas. For the induction step, we use the induction hypothesis immediately, for instance, in order to show $A \land B \leftrightarrow (A \land B)^{HR}$ we show that $A^{HR} \land B^{HR} \leftrightarrow (A \land B)^{HR}$.
	
	The equivalence for the cases $A \land B$, $A \lor B$, $\exists z\, A(z)$ and $\exists z \in t\, A(z)$ is evidently a consequence of the logical rules; for instance, for $A \land B$, we just drag the quantifiers inside and outside.
	
	For the remaining cases, the right to left implication is also consequence of the logical rules.
	
	We focus on the reciprocal.
	
	\begin{itemize}
		\item[] Case $A \rta B$
		
		We are going to show that $(\exists \ux\, A_{HR}(\ux) \rta \exists \uu\, B_{HR}(\uu))\rta (\exists \uU'''\forall \ux (A_{HR}(\ux) \rta B_{HR}(\uU'''\ux)))$.
		
		From $\exists \ux\, A_{HR}(\ux) \rta \exists \uu\, B_{HR}(\uu)$ we have $\forall \ux (A_{HR}(\ux) \rta \exists \uu\, B_{HR}(\uu))$. Using $\mathrm{IP}^*_{\not\exists}$ we obtain $\forall \ux \exists \uu' (A_{HR}(\ux) \rta \exists \uu\in \uu'\, B_{HR}(\uu))$. By monotonicity, we get $\forall \ux \exists \uu' ( A_{HR}(\ux) \rta B_{HR}(\bigsqcup_{\uu \in \uu'}\uu))$, i.e., $\forall \ux \exists \uu'' ( A_{HR}(\ux) \rta  B_{HR}(\uu''))$. Applying $\AC$, we get $\exists \uU'' \forall \ux \exists \uu'' \in \uU''\ux (A_{HR}(\ux) \rta B_{HR}(\uu''))$. By monotonicity, we obtain $\exists \uU'' \forall \ux (A_{HR}(\ux) \rta B_{HR}(\bigsqcup_{\uu'' \in \uU''\ux}\uu''))$, and therefore $\exists \uU'''\forall \ux (A_{HR}(\ux) \rta B_{HR}(\uU'''\ux))$.
		
		\vspace{.3cm}
		
		\item[] Case $\forall z\, A(z)$
		
		We are going to show that $\forall z \exists \ux \, A_{HR}(z,\ux) \rta \exists \uX' \forall z\, A_{HR}(z,\uX' z)$.
		
		From $\forall z \exists \ux \, A_{HR}(z,\ux)$, applying $\AC$ we obtain $\exists \uX \forall z\exists \ux \in \uX z\, A_{HR}(z,\ux)$. 
		By monotonicity we have $\exists \uX \forall z\, A_{HR}(z,\bigsqcup_{\ux \in \uX z}\ux)$. Thus $\exists \uX' \forall z\, A_{HR}(z,\uX' z)$.
		
		\vspace{.3cm}
		
		\item[] Case $\forall z\in t\, A(z)$
		
		We are going to show that $\forall z\in t\exists \ux \, A_{HR}(z,\ux) \rta \exists \ux \forall z\in t\, A_{HR}(z,\ux)$.
		
		Using collection (Lemma \ref{Coll}, involving $\AC$ and $\mathrm{IP}^*_{\not\exists}$), we get $\exists \uw \forall z\in t \exists \ux \in \uw \, A_{HR}(z,\ux)$. Take $\ux':=\bigsqcup_{\ux\in \uw}\ux$. Then, by monotonicity, we get $A_{HR}(z,\ux')$ for all $z\in t$.
	\end{itemize}
\end{proof}


\begin{corollary} \label{cor1}
	If
	\begin{equation*}
		\IL + \AC + \mathrm{IP}^*_{\not\exists} + T_{\not\exists} \vdash \forall \uz \exists \uw \, A(\uz,\uw)
	\end{equation*}
	
	\noindent	where $\uz,\uw$ are the only free variables of $A$, then there exist closed terms $\ur$ such that
	\begin{equation*}
		\IL + \AC + \mathrm{IP}^*_{\not\exists} + T_{\not\exists} \vdash \forall \uz \exists \uw \in \ur\uz\, A(\uz,\uw)
	\end{equation*}
\end{corollary}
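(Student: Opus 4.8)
The plan is to read the conclusion off from the Soundness Theorem, invoking the Characterisation Theorem only at the very end to pass from the $\exists$-free matrix back to $A$ itself. Since $\uz,\uw$ are the only free variables of $A$, the formula $\forall \uz \exists \uw\, A(\uz,\uw)$ is a sentence, so the free-variable tuple called $\ua$ in the Soundness Theorem is empty in this application. The first task is therefore to compute the herbrandized interpretation of this sentence. Writing $A^{HR} \equiv \exists \ux\, A_{HR}(\uz,\uw,\ux)$, the clause for the collecting existential gives $(\exists \uw\, A)^{HR} \equiv \exists \uW,\ux\, \exists \uw \in \uW\, A_{HR}(\uz,\uw,\ux)$, and then the clause for the universal quantifier gives
\[
(\forall \uz \exists \uw\, A)^{HR} \equiv \exists \uW,\ux\, \forall \uz\, \exists \uw \in \uW\uz\, A_{HR}(\uz,\uw,\ux\uz),
\]
where I now read $\uW$ and $\ux$ as the two function blocks of the single witnessing tuple for the outer $\forall \uz$: one producing the finite bounding sets for $\uw$ and the other the inner $\ux$-witnesses, both as functions of $\uz$.

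Next I would apply the Soundness Theorem to extract closed terms for this witnessing tuple. They split accordingly into closed terms $\ur$ realizing the bounding-set block and closed terms $\ut_x$ realizing the $\ux$-block, giving
\[
\IL + T_{\not\exists} \vdash \forall \uz\, \exists \uw \in \ur\uz\, A_{HR}(\uz,\uw,\ut_x\uz).
\]
Since $A_{HR}(\uz,\uw,\ut_x\uz)$ immediately yields $\exists \ux\, A_{HR}(\uz,\uw,\ux)$, which is exactly $A^{HR}(\uz,\uw)$, a single existential introduction under the quantifiers produces $\IL + T_{\not\exists} \vdash \forall \uz\, \exists \uw \in \ur\uz\, A^{HR}(\uz,\uw)$.

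Finally I would invoke the Characterisation Theorem, which gives $\IL + \AC + \mathrm{IP}^*_{\not\exists} \vdash A^{HR}(\uz,\uw) \rta A(\uz,\uw)$. Substituting $A$ for $A^{HR}$ inside the quantifier block yields $\IL + \AC + \mathrm{IP}^*_{\not\exists} + T_{\not\exists} \vdash \forall \uz\, \exists \uw \in \ur\uz\, A(\uz,\uw)$, with $\ur$ the required closed terms; note that it is precisely this last step that forces the conclusion to be stated over the full theory rather than over $\IL + T_{\not\exists}$. The only subtlety in the whole argument is the bookkeeping: one must correctly identify which block of the realizing tuple corresponds to the bounding sets $\uW$ (these become $\ur$) and then discard the inner witnesses $\ut_x$ by existential introduction. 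I expect no genuine obstacle beyond this, since both of the heavy-lifting theorems are already available.
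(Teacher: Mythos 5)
Your proposal is correct and follows essentially the same route as the paper's proof: compute $(\forall \uz \exists \uw\, A)^{HR}$, extract the closed terms $\ur$ (for the bounding sets) and $\ut$ (for the inner witnesses) via Soundness, re-introduce the existential over the inner witnesses, and conclude with the Characterisation Theorem, which is exactly why the conclusion lives in $\IL + \AC + \mathrm{IP}^*_{\not\exists} + T_{\not\exists}$. The only cosmetic difference is that you apply the Characterisation Theorem to $A$ itself under the quantifiers, whereas the paper applies it to the compound formula $\exists \uw \in \ur\uz\, A(\uz,\uw)$, whose $HR$-interpretation is precisely the formula obtained after existential introduction; both steps are equally valid.
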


\begin{proof}
	We have:
	\begin{equation*}
		(\forall \uz \exists \uw \, A(\uz,\uw))^{HR} \equiv \exists \uW', \uX' \forall \uz \exists \uw \in \uW'\uz\, A_{HR}(\uz,\uw,\uX'\uz)
	\end{equation*}
	
	Using Soundness, we get terms $\ut,\ur$ such that, for arbitrary $\uz$
	\begin{equation*}
		\exists \uw \in \ur\uz\, A_{HR}(\uz,\uw,\ut \uz)
	\end{equation*}
	
	thus
	\begin{equation*}
		\exists \ux \exists \uw \in \ur\uz\, A_{HR}(\uz,\uw,\ux)
	\end{equation*}
	
	which is just $(\exists \uw \in \ur\uz\, A(\uz,\uw))^{HR}$. By the Characterization Theorem, we obtain $\exists \uw \in \ur\uz\, A(\uz,\uw)$ for arbitrary $\uz$.
\end{proof}

\begin{corollary}\label{cor2}
	If
	\begin{equation*}
		\IL + \AC + \mathrm{IP}^*_{\not\exists} + T_{\not\exists} \vdash \forall \uz \exists \uw \, A(\uz,\uw)
	\end{equation*}
	
	\noindent where $A$ is an $\exists$-free formula whose only free variables are $\uz,\uw$, then there exist closed terms $\ur$ such that
	\begin{equation*}
		\IL + T_{\not\exists} \vdash \forall \uz \exists \uw \in \ur\uz\, A(\uz,\uw)
	\end{equation*}
\end{corollary}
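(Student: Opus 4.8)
The plan is to read the statement off the Soundness theorem directly, the whole point being that the extra hypothesis ``$A$ is $\exists$-free'' lets us dispense with the Characterization Theorem. First I would record, via the Remark, that for an $\exists$-free $A$ we have $A^{HR} \equiv A_{HR} \equiv A$, so $A$ carries an empty tuple of realizers. I would then compute the herbrandized interpretation of the (closed) sentence $D :\equiv \forall \uz \exists \uw\, A(\uz,\uw)$ by working through the clauses of the definition. Pushing the existential block $\exists \uw$ introduces only the bounding sets $\uW$ and, because $A$ supplies no matrix-realizers, none of the auxiliary witnesses $\uX'$ that occur in the general computation of Corollary \ref{cor1}; pushing the universal block $\forall \uz$ then makes those sets depend functionally on $\uz$. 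This yields
\begin{equation*}
	D^{HR} \equiv \exists \uW' \forall \uz \exists \uw \in \uW'\uz\, A(\uz,\uw),
\end{equation*}
so that $D_{HR}(\uW') \equiv \forall \uz \exists \uw \in \uW'\uz\, A(\uz,\uw)$ and the realizer tuple consists solely of $\uW'$.

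Next I would apply the Soundness theorem to the hypothesis $\IL + \AC + \mathrm{IP}^*_{\not\exists} + T_{\not\exists} \vdash D$. Since $\uz$ and $\uw$ are the only free variables of $A$, the sentence $D$ is closed, so the free-variable tuple $\ua$ of Soundness is empty, and the theorem produces closed terms $\ur$ (realizing $\uW'$) with
\begin{equation*}
	\IL + T_{\not\exists} \vdash \forall \uz \exists \uw \in \ur\uz\, A(\uz,\uw),
\end{equation*}
which is verbatim the desired conclusion.

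The only place where $\exists$-freeness is essential — and the sole difference from Corollary \ref{cor1} — is that here no appeal to the Characterization Theorem is needed. In Corollary \ref{cor1} one still had to convert the $\mathrm{HR}$-interpreted statement back to $\exists \uw \in \ur\uz\, A$ using $A \leftrightarrow A^{HR}$, an equivalence available only over $\IL + \AC + \mathrm{IP}^*_{\not\exists}$, which is why those principles persist in its conclusion. Because $A_{HR}$ is literally $A$, the formula handed back by Soundness already \emph{is} the target, so the conclusion holds over the weaker theory $\IL + T_{\not\exists}$. I expect the only slightly delicate part to be the iterated computation of $D^{HR}$ over the tuples $\uz,\uw$ — verifying that the nested bounded existentials and the functional dependence of $\uW'$ on $\uz$ emerge exactly as displayed — while everything else is a direct instantiation of Soundness.
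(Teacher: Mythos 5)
Your proposal is correct and follows essentially the same route as the paper's own proof: compute $(\forall \uz \exists \uw\, A(\uz,\uw))^{HR} \equiv \exists \uW' \forall \uz \exists \uw \in \uW'\uz\, A(\uz,\uw)$ using that $A_{HR} \equiv A$ for $\exists$-free $A$, then apply Soundness to obtain the closed terms $\ur$ directly over $\IL + T_{\not\exists}$. Your additional observation---that $\exists$-freeness is precisely what removes the need for the Characterization Theorem, and hence the stronger verifying theory of Corollary~\ref{cor1}---is exactly the point of this corollary.
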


\begin{proof}
	We have:
	\begin{equation*}
		(\forall \uz \exists \uw \, A(\uz,\uw))^{HR} \equiv \exists \uW' \forall \uz \exists \uw \in \uW'\uz\, A(\uz,\uw)
	\end{equation*}
	
	Using Soundness, we get terms $\ur$ such that
	\begin{equation*}
		\IL + T_{\not\exists} \vdash \forall \uz \exists \uw \in \ur\uz\, A(\uz,\uw)
	\end{equation*}
\end{proof}

\begin{remark}
	We also have the previous results in the absence of the variables $\uz$. Due to Lemma \ref{setlike}, we conclude the following:
\end{remark}

\begin{corollary}\label{cor3}
	If
	\begin{equation*}
		\IL + \AC + \mathrm{IP}^*_{\not\exists} + T_{\not\exists} \vdash \exists x \, A(x)
	\end{equation*}
	
	\noindent where $x$ is the only free variable of $A$, then there exist closed terms $t_1,\ldots,t_n$ such that
	\begin{equation*}
		\IL + \AC + \mathrm{IP}^*_{\not\exists} + T_{\not\exists} \vdash A(t_1) \lor \ldots \lor A(t_n)
	\end{equation*}
\end{corollary}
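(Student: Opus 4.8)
The plan is to read $\exists x\, A(x)$ as the instance of Corollary~\ref{cor1} in which the parameter tuple $\uz$ is empty and the witness tuple $\uw$ is the single variable $x$ (of some type $\rho$), and then to turn the bounded existential delivered by that corollary into a finite disjunction by means of Lemma~\ref{setlike}. This is exactly the route announced by the preceding remark.

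First I would invoke (the $\uz$-free form of) Corollary~\ref{cor1} on the hypothesis, obtaining a single closed term $r$ of type $\rho^*$ with
\begin{equation*}
	\IL + \AC + \mathrm{IP}^*_{\not\exists} + T_{\not\exists} \vdash \exists x \in r\, A(x).
\end{equation*}
Since $\exists x \in r\, A(x)$ abbreviates $\exists x (x \in r \land A(x))$ and $r$ is a closed term of star type, I would then apply Lemma~\ref{setlike} to $r$, yielding closed terms $q_1,\ldots,q_n$ of type $\rho$ with $\IL \vdash w \in r \leftrightarrow w = q_1 \lor \ldots \lor w = q_n$. Replacing $x \in r$ by this disjunction, distributing $\land$ over $\lor$, and pulling the existential quantifier through the disjunction (by the logical rules of $\ILp$) gives
\begin{equation*}
	\exists x (x = q_1 \land A(x)) \lor \ldots \lor \exists x (x = q_n \land A(x)).
\end{equation*}

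It remains to reduce each disjunct $\exists x (x = q_i \land A(x))$ to $A(q_i)$, and this is where the only genuine work lies. The equality axioms of $\IL$ are postulated only for atomic formulas, so I would first establish, by a routine induction on the logical structure of $A$, the Leibniz property $\IL \vdash x = q_i \land A(x) \rta A(q_i)$ for arbitrary $A$. Granting this, and noting that $x$ is not free in $A(q_i)$, the $\exists$-elimination rule yields $\exists x (x = q_i \land A(x)) \rta A(q_i)$; combining the $n$ disjuncts produces $A(q_1) \lor \ldots \lor A(q_n)$. Setting $t_i := q_i$ completes the argument.

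I expect the extension of the equality axiom from atomic to arbitrary formulas to be the only point requiring care, everything else being an unwinding of definitions together with applications of the propositional and quantifier rules already deployed throughout the soundness proof. Note also that only the first step genuinely uses $\AC$ and $\mathrm{IP}^*_{\not\exists}$ (via Corollary~\ref{cor1}); the passage from $\exists x \in r\, A(x)$ to the disjunction takes place entirely within $\IL$, so the conclusion lands in the stated theory.
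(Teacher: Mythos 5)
Your proposal is correct and takes essentially the same route as the paper, which proves this corollary exactly as announced in the preceding remark: apply the $\uz$-free instance of Corollary~\ref{cor1} to obtain a closed term $r$ of star type with $\exists x \in r\, A(x)$, then convert the bounded existential into a finite disjunction via Lemma~\ref{setlike}. The one point you flag as needing care---extending the equality axiom from atomic formulas to arbitrary $A$ by induction on logical structure---is also the paper's position (see its Section~4.1, where this extension is noted to be standard), so your treatment matches the intended argument in full.
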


\begin{corollary}\label{cor4}
	If
	\begin{equation*}
		\IL + \AC + \mathrm{IP}^*_{\not\exists} + T_{\not\exists} \vdash \exists x \, A(x)
	\end{equation*}
	
\noindent	where $A$ is an $\exists$-free formula whose only free variable is $x$, then there exist closed terms $t_1,\ldots,t_n$ such that
	\begin{equation*}
		\IL + T_{\not\exists} \vdash A(t_1) \lor \ldots \lor A(t_n)
	\end{equation*}
\end{corollary}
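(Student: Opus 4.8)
The plan is to follow the template of the proof of Corollary~\ref{cor2} (its $\exists$-free counterpart without a parameter block), but to replace the concluding appeal to the Characterization Theorem by Lemma~\ref{setlike}, exactly as in the passage from Corollary~\ref{cor1} to Corollary~\ref{cor3}. First I would compute the interpretation of the hypothesis. Writing $\rho$ for the type of $x$, the existential clause of the definition of $(\cdot)^{HR}$ gives, since $A$ is $\exists$-free (so $A_{HR} \equiv A$ and the auxiliary realizer tuple is empty),
\begin{equation*}
	(\exists x\, A(x))^{HR} \equiv \exists Z \exists x \in Z\, A(x),
\end{equation*}
where $Z$ is a single variable of type $\rho^*$. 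The point to record here is that the bounded quantifier $\exists x \in Z$ leaves the matrix $\exists x \in Z\, A(x)$ $\exists$-free, which is precisely what will let the final conclusion live inside $\IL + T_{\not\exists}$.

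Next I would invoke the Soundness Theorem. The formula $\exists x\, A(x)$ is closed, its only free variable $x$ being bound, so the parameter block $\ua$ is empty. Soundness then yields a single closed term $t$ of type $\rho^*$ realizing $Z$, with
\begin{equation*}
	\IL + T_{\not\exists} \vdash \exists x \in t\, A(x).
\end{equation*}
No residual choice or independence-of-premise assumptions survive on the left, exactly because the interpreted matrix is $\exists$-free; this is the feature that distinguishes the present corollary from Corollary~\ref{cor3}.

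The final step extracts the explicit witnesses. Since $t$ is a closed term of type $\rho^*$, Lemma~\ref{setlike} supplies closed terms $t_1,\ldots,t_n$ of type $\rho$ with $\IL \vdash w \in t \leftrightarrow w = t_1 \lor \ldots \lor w = t_n$. Unfolding $\exists x \in t\, A(x)$ as $\exists x (x \in t \land A(x))$ by axiom~(ii), substituting the membership equivalence, and distributing the existential over the disjunction produces $\exists x(x = t_1 \land A(x)) \lor \ldots \lor \exists x(x = t_n \land A(x))$, from which $A(t_1) \lor \ldots \lor A(t_n)$ follows in $\IL + T_{\not\exists}$.

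The only genuinely delicate point is this last move: the equality axioms are stated only for atomic formulas, so replacing $x$ by $t_i$ inside the possibly compound $\exists$-free formula $A$ is not an immediate axiom instance. The hard part, then, is to observe that full substitutivity of equality for arbitrary formulas is nonetheless derivable in $\IL$ by the standard induction on formula complexity anchored at the atomic case, giving $x = t_i \land A(x) \rta A(t_i)$; granting this, everything else is the routine bookkeeping already rehearsed in Corollaries~\ref{cor1}--\ref{cor3}.
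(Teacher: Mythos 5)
Your proposal is correct and matches the paper's intended argument: the paper derives Corollary~\ref{cor4} implicitly by applying Corollary~\ref{cor2} (i.e., Soundness on the $\exists$-free matrix, with an empty parameter tuple $\uz$) and then Lemma~\ref{setlike} to convert the closed set term into an explicit disjunction of witnesses. Your closing observation about substitutivity of equality for compound formulas is also the paper's own position (Section~4.1 notes the atomic equality scheme suffices for arbitrary formulas), so the proof is complete as written.
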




\section{Final Notes}\label{section:arithmetic}

\subsection{Atomic versus $\exists$-free} 

The axiom scheme for equality, on page 5, was formulated with $A$ an atomic formula. As is well known, this atomic formulation is enough to ensure we have the result for any formula $A$ within the system. Instead of using atomic formulas, we could have presented the scheme with $A$ an $\exists$-free formula. From our interpretation perspective, the $\exists$-free formulas are still computationally empty.   

\subsection{The translation of the universal quantification} 

Note that, in the case of the bounded modified realizability $(\cdot)^{br}$ \cite{FerreiraNunes(06)}, the translation of the universal quantification needs to be more evolved, namely
\begin{equation*}
	(\forall z A(z))^{br}:\equiv \widetilde{\exists} \uX \widetilde{\forall} a \forall z\leq^* a A_{br}(z,\uX a)
\end{equation*}
Monotonicity explains this need, because one has to ensure that 
\begin{equation*}
	X\leq^* Y \to Xz\leq^* Yz
\end{equation*}
which is the case if $z$ is monotone, i.e., if $z\leq^* z $, thus the need to consider monotone majorants $a$ such that $z\leq^* a$.
In the case of the herbrand modified realizability we can deal with $z$ itself since 
\begin{equation*}
	X \sqsubseteq Y \to Xz \sqsubseteq Yz
\end{equation*}

\subsection{The star calculus on arithmetic}\label{star_arit}

The possibility of extending the star calculus to the arithmetic setting keeping the strong normalization and the Church-Rosser properties was already discussed in \cite{Ferreira(20)} but without proofs of such results. We present here such proofs. 


We denote the ground type as $N$, intending it to represent natural numbers; and we consider the following constants from the language of arithmetic: $0$ of type $N$, $S$ of type $N \to N$ and $R_{\sigma}$ of type $N \to \sigma \to (\sigma \to N \to \sigma) \to \sigma$. Note that $\sigma$ can be of star type.




We assign the following conversions to the new constants above:
\begin{itemize}
	\item[] $R_{\sigma}0qr \conv q$ ($q: \sigma,\ r: \sigma \rta N \rta \sigma$)
	\item[] $R_{\sigma}(St)qr \conv r(Rtqr,t)$ ($t: N,\ q: \sigma,\ r: \sigma \rta N \rta \sigma$)
\end{itemize}


We are going to show, adapting the proof in \cite{FerreiraFerreira(17)} based on Tait's \emph{reducibility} technique \cite{Tait(67),Troelstra(73)}, that the star calculus in the extended (arithmetical) context remains strongly normalizable. 
First some definitions.

\begin{definition}
	Given a term $t$ of type $\sigma^*$, we define a finite set of terms of type $\sigma$, the \emph{surface elements} of $t$, which we denote by $\mathrm{SM}(t)$, by induction on the complexity of $t$ as follows:
	
	\begin{itemize}
		\item[(i)] If $t$ is of the form $\mathfrak{s}(q)$ then $\mathrm{SM}(t)$ is $\{q\}$;
		\item[(ii)] If $t$ is of the form $\cup qr$, then $\mathrm{SM}(t)$ is $\mathrm{SM}(q) \cup \mathrm{SM}(r)$;
		\item[(iii)] In any other case, $\mathrm{SM}(t)$ is $\emptyset$.
	\end{itemize}
\end{definition}

\begin{definition}
	The set $\Red_{\sigma}$ of the \emph{reducible} terms of type $\sigma$ is defined recursively in the complexity of the type $\sigma$ as follows:
	
	\begin{itemize}
		\item[(i)] $t \in \Red_N:\equiv$ $t$ is strongly normalizable;
		\item[(ii)] $t \in \Red_{\sigma \rightarrow \tau}:\equiv$ for all $q$, if $q \in \Red_{\sigma}$ then $tq \in \Red_{\tau}$;
		\item[(iii)] $t \in \Red_{\sigma^*}:\equiv$  $t$ is strongly normalizable and if, for any term obtained by reduction of $t$, its surface elements are reducible.
	\end{itemize}
\end{definition}

The proofs of lemmas 1, 2, 3 and 4 presented in \cite{FerreiraFerreira(17)} on pages 526--527, are already applicable in the new context. We just recall here the statement of two of those results: 



\begin{lemma} \label{l2}
	If $t \in \Red_{\sigma}$, then $t$ is strongly normalizable;
\end{lemma}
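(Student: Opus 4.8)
The plan is to argue by induction on the structure of the type $\sigma$, exploiting the fact that strong normalizability is built directly into the definition of $\Red$ at the base types. For $\sigma = N$ and for every star type $\sigma = \rho^*$, clause (i), respectively clause (iii), of the definition of $\Red_\sigma$ explicitly demands that $t$ be strongly normalizable, so the statement holds with nothing to prove. The only genuine work is therefore the arrow case.

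So, for $\sigma = \rho \rta \tau$, suppose $t \in \Red_{\rho \rta \tau}$. To extract a strongly normalizable term from clause (ii), I would apply $t$ to a reducible argument of type $\rho$, the natural choice being a variable $x^\rho$: provided $x \in \Red_\rho$, clause (ii) gives $tx \in \Red_\tau$, and the induction hypothesis (applied to the smaller type $\tau$) yields that $tx$ is strongly normalizable. Since any one-step reduction $t \succ_1 t'$ is also a one-step reduction $tx \succ_1 t'x$ (the redex contracted inside $t$ is still a subterm of $tx$), an infinite reduction sequence out of $t$ would lift to an infinite reduction sequence out of $tx$, contradicting the strong normalizability of $tx$. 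Hence $t$ is strongly normalizable.

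The key point that makes the arrow case work, and the place I expect the only real subtlety, is the need for a reducible term of type $\rho$ to serve as the argument. This cannot be obtained from the present lemma in isolation; it rests on the companion fact (one of Lemmas 1, 3, 4 transferred from \cite{FerreiraFerreira(17)}) that variables are reducible. That fact is itself established through the standard Tait-style packaging, in which reducibility of a variable follows from the more general statement that a neutral term all of whose one-step reducts are reducible is itself reducible (a variable, having no reducts, satisfies this vacuously). In the usual presentation these properties are proved by a single simultaneous induction on types together with the present claim, so I would either invoke that companion lemma directly or fold the present statement into that simultaneous induction.

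Finally, I would note why the extension to the arithmetical setting costs nothing here. The new constants $0$, $S$ and $R_\sigma$, together with their conversions, affect only which concrete terms turn out to be reducible (handled in the later adequacy lemmas), not the purely type-structural argument above: the definition of $\Red_\sigma$ retains exactly the same shape in each type class, so the induction on $\sigma$ carries over verbatim from the predicate-logic calculus. The main obstacle, to reiterate, is not any calculation but the logical dependency on the reducibility of variables, which is why this lemma is genuinely proved in concert with its neighbours rather than standalone.
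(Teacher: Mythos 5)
Your proof is correct and takes essentially the same approach as the paper relies on: the paper gives no proof of this lemma itself, deferring instead to Lemmas 1--4 of \cite{FerreiraFerreira(17)}, which are exactly the Tait-style simultaneous induction on types you reconstruct (base and star types immediate from the definition of $\Red$, the arrow case by applying $t$ to a reducible variable, with reducibility of variables supplied by the companion lemma proved in the same induction). Your closing remark that the new constants $0$, $S$, $R_\sigma$ leave this type-structural argument untouched is precisely the justification behind the paper's claim that those proofs ``are already applicable in the new context.''
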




\begin{lemma} \label{l3}
	If $t$ is a reducible term and $t \succeq q$, then $q$ is reducible.
\end{lemma}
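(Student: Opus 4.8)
The plan is to prove Lemma~\ref{l3} by induction on the structure of the type $\sigma$, following exactly the reducibility framework of Tait as adapted in \cite{FerreiraFerreira(17)}. The statement asserts that reducibility is preserved under reduction, so I must show: if $t\in\Red_\sigma$ and $t\succeq q$, then $q\in\Red_\sigma$. Since $\succeq$ is the reflexive-transitive closure of $\succ_1$, it suffices to treat the one-step case $t\succ_1 q$ and then iterate; I will therefore assume $t\succ_1 q$ throughout.

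\medskip

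First I would handle the base case, $\sigma = N$. By definition $t\in\Red_N$ means $t$ is strongly normalizable, i.e.\ there is no infinite chain of one-step reductions from $t$. If $t\succeq q$ then any infinite chain from $q$ would prepend to give an infinite chain from $t$, contradicting strong normalizability; hence $q$ is strongly normalizable and $q\in\Red_N$. Next, the arrow case $\sigma\rightarrow\tau$. Suppose $t\in\Red_{\sigma\to\tau}$ and $t\succeq q$; to show $q\in\Red_{\sigma\to\tau}$, take any $r\in\Red_\sigma$ and I must verify $qr\in\Red_\tau$. The key observation is that $t\succeq q$ induces $tr\succeq qr$ (a reduction inside a subterm is still a reduction). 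Since $t$ is reducible and $r\in\Red_\sigma$, we have $tr\in\Red_\tau$, and by the induction hypothesis on the smaller type $\tau$ applied to $tr\succeq qr$, we conclude $qr\in\Red_\tau$. Thus $q\in\Red_{\sigma\to\tau}$.

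\medskip

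The star case $\sigma^*$ is where the real content lies. Here $t\in\Red_{\sigma^*}$ means $t$ is strongly normalizable and, for every term $t'$ obtained by reduction from $t$, the surface elements in $\mathrm{SM}(t')$ are all reducible (of type $\sigma$). Suppose $t\succeq q$. Strong normalizability of $q$ follows as in the base case. For the second condition, let $q'$ be any term with $q\succeq q'$. Then $t\succeq q\succeq q'$, so $q'$ is itself obtained by reduction from $t$; since $t$ is reducible, every element of $\mathrm{SM}(q')$ is reducible. This is exactly the condition required to place $q$ in $\Red_{\sigma^*}$, so $q\in\Red_{\sigma^*}$. The point making this case clean is that the defining clause for $\Red_{\sigma^*}$ quantifies over \emph{all} reducts of $t$, so the reducibility witnessing data for $t$ automatically restricts to any reduct $q$; no induction hypothesis is even needed for this clause, only the transitivity of reduction.

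\medskip

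The step I expect to be the main (though modest) obstacle is keeping the surface-element bookkeeping honest in the star case: one must be sure that the condition ``surface elements of every reduct are reducible'' is genuinely inherited, which relies on $\{q' : q\succeq q'\}\subseteq\{t' : t\succeq t'\}$, a consequence of transitivity of $\succeq$. The only subtlety introduced by the arithmetical extension of Section~\ref{star_arit} is that the new recursor constant $R_\sigma$ and its two conversions enlarge the set of one-step reductions; however, the argument above never inspects which conversion was applied---it uses only that $\succ_1$ generates a well-founded reduction relation and that reducing a subterm yields a reduction of the whole term. Hence the proof of \cite{FerreiraFerreira(17)} carries over verbatim, and the lemma holds in the extended context.
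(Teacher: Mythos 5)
Your proof is correct and takes essentially the same route as the paper's: the paper defers to the Tait-style argument of \cite{FerreiraFerreira(17)} (induction on the type, with the arrow case using $tr \succeq qr$ and the star case using transitivity of $\succeq$), which is exactly what you reconstruct, and your observation that the argument never inspects which conversion was applied is precisely why it carries over to the arithmetical setting with the recursor. The only cosmetic slip is that after announcing you would treat one-step reductions $t \succ_1 q$ and iterate, your three cases in fact argue directly with $\succeq$ --- which is harmless, and arguably cleaner.
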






\begin{theorem}
	All terms in the star combinatory calculus for arithmetic  are reducible.
\end{theorem}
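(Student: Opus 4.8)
The plan is to argue by induction on the structure of terms, exploiting that every term of the calculus is built from constants and variables by application alone. The application step is immediate from clause (ii) of the definition of $\Red$: if $t \in \Red_{\sigma \rta \tau}$ and $q \in \Red_\sigma$, then $tq \in \Red_\tau$. Hence the whole theorem reduces to showing that every variable and every constant is reducible. For the variables, the combinators $\Pi$ and $\Sigma$, the star constants $\mathfrak{s}$, $\cup$, $\bigcup$, and the function symbols of $\mathcal{L}$, the arguments of \cite{FerreiraFerreira(17)} transfer verbatim: those proofs are type-generic, never exploit any feature specific to the ground type, and rely only on the reducibility machinery encoded in Lemmas \ref{l2} and \ref{l3} (together with the neutral-term closure property and the surface-element analysis recalled there). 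So the genuinely new content is the reducibility of the three arithmetical constants $0$, $S$ and $R_\sigma$.

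The constants $0$ and $S$ are straightforward. Since $0$ is normal, $0 \in \Red_N$. For $S$ I must check that $Sq \in \Red_N$ whenever $q \in \Red_N$; as $q$ is then strongly normalizable by Lemma \ref{l2} and no conversion fires at the head of $Sq$, every reduction of $Sq$ takes place inside $q$, so $Sq$ is strongly normalizable, hence reducible.

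The recursor $R_\sigma$ is the heart of the proof and the main obstacle. I have to show that $R_\sigma tqr \in \Red_\sigma$ for all reducible $t : N$, $q : \sigma$ and $r : \sigma \rta N \rta \sigma$. By Lemma \ref{l2} all of $t$, $q$, $r$ are strongly normalizable, so I may write $\nu(\cdot)$ for the length of the longest reduction sequence of a term. Since $R_\sigma tqr$ is neutral, it suffices (by the neutral-term closure property carried over from \cite{FerreiraFerreira(17)}) to prove that every one-step reduct of $R_\sigma tqr$ is reducible, and I will do this by induction on the lexicographically ordered pair $(\nu(t)+\nu(q)+\nu(r),\, |t|)$, where $|t|$ is the size of $t$. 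The reducts internal to $t$, $q$ or $r$ strictly decrease the first component (using Lemma \ref{l3} to keep the contracted subterm reducible), so the induction hypothesis applies directly. The head reduct when $t$ is $0$ is just $q$, which is reducible by assumption. The delicate case is $t \equiv St'$: the reduct is $r(R_\sigma t'qr, t')$, where $t'$ is reducible (being a strongly normalizable subterm of $t$, and reducibility at type $N$ is just strong normalizability), and although $\nu(St') = \nu(t')$ leaves the first component unchanged, $|t'| < |St'|$ strictly decreases the second; the induction hypothesis then yields $R_\sigma t'qr \in \Red_\sigma$, and reducibility of $r$ at type $\sigma \rta N \rta \sigma$ gives $r(R_\sigma t'qr, t') \in \Red_\sigma$.

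The essential difficulty, and the reason for the lexicographic measure, is precisely this successor case: prefixing $S$ does not shorten the longest reduction, so $\nu$ alone cannot decrease, and one must pay for the head contraction with the drop in syntactic size of the recursion argument. Once the three arithmetical constants are handled, the induction on term structure closes and every term is reducible; combined with Lemma \ref{l2}, this re-establishes strong normalization in the arithmetical setting.
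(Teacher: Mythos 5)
Your overall decomposition (application is immediate from clause (ii) of the definition of $\Red$, the non-arithmetical constants and variables are handled as in \cite{FerreiraFerreira(17)}, $0$ and $S$ are easy, the recursor is the crux) matches the paper, and your lexicographic induction for $R_\sigma$ --- including the correct observation that $\nu(St')=\nu(t')$ forces a secondary component $|t|$ --- is in itself a sound, Girard-style argument. The genuine gap is your appeal to a ``neutral-term closure property'' (the usual CR3 condition: a neutral term all of whose one-step reducts are reducible is itself reducible). That property is not among the lemmas available here: the paper recalls only that reducible terms are strongly normalizable (Lemma \ref{l2}) and that reducibility is closed under reduction (Lemma \ref{l3}), and nothing of CR3 type is ``recalled there''; indeed both the paper's proof and the one in \cite{FerreiraFerreira(17)} are structured precisely so as never to need it. Nor is CR3 a verbatim transfer in this setting: since there are no abstractions, ``neutral'' cannot mean ``not a $\lambda$''; it must be defined so that (i) no extension $u v_1\cdots v_k$ of a neutral $u$ by further arguments has a root redex, (ii) neutrality is preserved when arguments are appended (note that $\Pi$, $\Sigma t$, $\bigcup$ and $\mathfrak{s}$ each violate (i) or (ii)), and (iii) a neutral term of star type has empty surface-element set, because clause (iii) of the definition of $\Red_{\sigma^*}$ requires the surface elements of the term \emph{itself}, not just of its proper reducts, to be reducible. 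Your proof never addresses the surface-element condition at all --- it is entirely hidden inside the unproven CR3 --- so as written the step ``it suffices to show every one-step reduct is reducible'' is unjustified at function and star types (at type $N$ it is fine, since reducibility there is just strong normalization).

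The paper closes exactly this hole by a different route. Writing $\sigma=\sigma_1\rta\cdots\rta\sigma_n\rta\rho$ with $\rho$ ground or star, it proves directly that $Rtqrt_1\cdots t_n\in\Red_\rho$ for all reducible $t,q,r,t_1,\ldots,t_n$, by inspecting an arbitrary reduction sequence of this term of base/star type: either no conversion at the head $R$ ever fires (then strong normalization follows from that of the components), or the sequence reaches $R0q'r't_1'\cdots t_n'$ or $R(St')q'r't_1'\cdots t_n'$, whose contracta are reducible by Lemma \ref{l3} and the induction hypothesis; the condition on surface elements is then checked explicitly, using $\mathrm{SM}(Rt'q'r't_1'\cdots t_n')=\emptyset$. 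Its induction measure also differs: instead of $(\nu(t)+\nu(q)+\nu(r),|t|)$ it uses $\mu(t)$, the maximal number of leading occurrences of $S$ among the normal forms of $t$, which drops by one in the successor case and thus resolves in one stroke the difficulty your secondary component was introduced for. To repair your proof, either state and prove CR3 for this calculus (with a notion of neutrality satisfying (i)--(iii) above), or replace it by the paper's unfolding-to-base-type analysis of reduction sequences.
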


\begin{proof}
	Following the proof in \cite{FerreiraFerreira(17)}, we only need to show that the arithmetical constants are reducible.  It is clear that $0 \in \Red_N$. We have that if $t$ is a reducible term of type $N$ (i.e., strongly normalizable), then $St \in \Red_N$. Thus $S \in \Red_{N \rta N}$.
	
	Following Remark \ref{generictype}, we assume that $\sigma$ is $\sigma_1 \rta \cdots \rta \sigma_n \rta \rho$, with $\rho$ star type or $N$.
	We just need to prove that $R_{\sigma}$ is reducible.
	
	

	
	%
	Let $t \in \Red_N, q \in \Red_\sigma, r \in \Red_{\sigma \rta N \rta \sigma}$ and $t_1,\ldots, t_n$ with $t_i \in \Red_{\sigma_i}$. We show that $R tqrt_1\cdots t_n \in \Red_{\rho}$. 
	Since $t$ is strongly normalizable, there exist finite normal forms of $t$, which we here denote $t^{(1)},\ldots,t^{(k)}$. Each $t^{(i)}$ is of the form $S\cdots St^*$ with $\mu_i(t)$ instances of $S$ at the beginning of the term, where $t^*$ is not of the form $St'^*$. We define $\mu(t):=\max_i \mu_i(t)$.
	The proof is by induction on $\mu(t)$.
	

	For $R tqrt_1\cdots t_n$ with $\mu(t)=0$, we have that any $t'$ obtained by reduction of $t$ is not of the form $St'^*$, therefore we can only apply the conversion of $R_\sigma$ if $t'$ is $0$. Thus, in a sequence of one-step reductions of $R tqrt_1\cdots t_n$ we have two cases:
	\begin{itemize}
		\item We do not apply a conversion on $R$, therefore, as the terms involved are strongly normalizable, $R tqrt_1\cdots t_n$ also is;
		\item we reach the form $R 0q'r't'_1\cdots t'_n$ with $q \succeq q', r \succeq r', t_i \succeq t'_i$. We have $R 0q'r't'_1\cdots t'_n \succ_1 q't'_1\cdots t'_n \in \Red_\rho$ (using lemma \ref{l3}).
	\end{itemize}
	
	Thus $R tqrt_1\cdots t_n$ with $\mu(t)=0$ is strongly normalizable.
	
	We have $\mathrm{SM}(R t'q'r't'_1\cdots t'_n)=\emptyset$. As $q't'_1\cdots t'_n \in \Red_\rho$, the surface elements of terms obtained by reduction of $q't'_1\cdots t'_n$ are reducible. Thus the surface elements of terms obtained by reduction of $R tqrt_1\cdots t_n$ with $\mu(t)=0$ are reducible.
	
	We assume that $R tqr \in \Red_\sigma$ for all $t \in \Red_N$ with $\mu(t) \le m$ and for all $q \in \Red_\sigma, r \in \Red_{\sigma \rta N \rta \sigma}$.
	In a sequence of one-step reductions of $R tqrt_1\cdots t_n$ with $\mu(t)=m+1$ we have 3 cases (here, $q \succeq q', r \succeq r', t_i \succeq t'_i$):
	\begin{itemize}
		\item We do not apply a conversion on $R$, therefore, as the terms involved are strongly normalizable, $R tqrt_1\cdots t_n$ also is;
		\item we reach the form $R 0q'r't'_1\cdots t'_n$, case analogous to $\mu(t)=0$
		\item we reach the form $R (St')q'r't'_1\cdots t'_n$. We have $R (St')q'r't'_1\cdots t'_n \succ_1\\ r'(R t'q'r',t')t'_1\cdots t'_n \in \Red_\rho$ since $\mu(t') \le m$ (using lemma \ref{l3}).
	\end{itemize}
	
	Thus $R tqrt_1\cdots t_n$ with $\mu(t)=m+1$ is strongly normalizable. We have $\mathrm{SM}(R t'q'r't'_1\cdots t'_n)=\emptyset$. As $r'(R t'q'r',t')t'_1\cdots t'_n \in \Red_\rho$, the surface elements of terms obtained by reduction of $r'(R t'q'r',t')t'_1\cdots t'_n$ are reducible. Thus the surface elements of terms obtained by reduction of $R tqrt_1\cdots t_n$ with $\mu(t)=m+1$ are reducible.
	Therefore $R tqrt_1\cdots t_n \in \Red_\rho$. We conclude that $R_{\sigma}$ is reducible.
\end{proof}

\begin{corollary}
	The star combinatory calculus on arithmetic has the property of strong normalization.
\end{corollary}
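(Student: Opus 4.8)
The plan is to derive strong normalization as an immediate consequence of the two results that precede the corollary: the Theorem asserting that \emph{every} term in the star combinatory calculus for arithmetic is reducible, and Lemma \ref{l2}, which states that reducibility entails strong normalizability. Since the genuine combinatorial work has already been invested in proving that the new arithmetical constants $0$, $S$ and $R_\sigma$ are reducible (the heart of the preceding Theorem, handled by the induction on $\mu(t)$), the corollary should require essentially no new argument.

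Concretely, first I would invoke the Theorem to conclude that an arbitrary term $t$ of type $\sigma$ lies in $\Red_\sigma$, regardless of the shape of $\sigma$ (ground type $N$, arrow type, or star type). Then I would apply Lemma \ref{l2} to this $t$: membership in $\Red_\sigma$ immediately yields that $t$ is strongly normalizable. Since $t$ was arbitrary, every term is strongly normalizable, and this is by definition the strong normalization property for the calculus. The whole proof is thus a one-line composition: $t \in \Red_\sigma$ by the Theorem, hence $t$ is strongly normalizable by Lemma \ref{l2}.

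I do not anticipate a genuine obstacle here, precisely because strong normalization was factored out of the reducibility predicate by design: Lemma \ref{l2} is exactly the bridge that converts the technically convenient notion of reducibility back into the desired normalization statement. The only point worth a moment's care is that the definition of $\Red_\sigma$ is by recursion on the type, so one must make sure Lemma \ref{l2} covers all three clauses (the base case $\Red_N$, the arrow case $\Red_{\sigma\rta\tau}$, and the star case $\Red_{\sigma^*}$); but since that lemma is stated for an arbitrary type $\sigma$, this is already guaranteed. Consequently the corollary follows directly, with no appeal to any property of the reduction relation beyond what the Theorem and Lemma \ref{l2} already encapsulate.
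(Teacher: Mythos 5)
Your proposal is correct and matches the paper's intended argument exactly: the corollary is an immediate consequence of the theorem that all terms are reducible together with Lemma~\ref{l2}, which is precisely why the paper states it without a separate proof. Nothing further is needed.
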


\begin{theorem} \label{CR}
	The star combinatory calculus on arithmetic has the Church-Rosser property, that is, each term has a unique normal form.
\end{theorem}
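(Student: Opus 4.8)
The plan is to deduce confluence from local confluence via Newman's Lemma. The strong normalization property just established guarantees that the reduction relation is terminating, so by Newman's Lemma it suffices to verify \emph{local confluence}: whenever $t \succ_1 q_1$ and $t \succ_1 q_2$, there is a common term $q$ with $q_1 \succeq q$ and $q_2 \succeq q$. The Church-Rosser statement (uniqueness of normal forms) then follows immediately, since in a strongly normalizing confluent system every term reaches exactly one normal form.

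For local confluence I would adapt the critical-pair analysis already carried out in \cite{FerreiraFerreira(17)} for the combinator and star conversions, and only check the overlaps potentially introduced by the two new recursor conversions
\begin{equation*}
	R_\sigma 0 q r \conv q, \qquad R_\sigma (St) q r \conv r(Rtqr,t).
\end{equation*}
Both left-hand sides are headed by $R_\sigma$, whereas every combinator and star rule is headed by $\Sigma$, $\Pi$ or $\bigcup$; hence no recursor left-hand side unifies with a non-variable subterm of any other rule's left-hand side, and no other rule's left-hand side unifies with a non-variable subterm ($R_\sigma$, $0$, $S$) of a recursor rule. The two recursor rules also cannot overlap one another, since their numeral arguments are the non-unifiable patterns $0$ and $St$. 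Thus the extension creates \emph{no new critical pairs}, and at the root there is nothing to reconcile beyond what \cite{FerreiraFerreira(17)} already treats.

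It then remains to close the two generic configurations of a divergent pair of redexes. If the two contracted redexes occupy disjoint positions the reductions commute trivially, and the diagram closes in a single step on each side. If one redex lies inside an argument of the other, I would exploit that every rule is left-linear, each pattern variable occurring exactly once on the left. The only rules that duplicate an argument are $R_\sigma(St)qr \conv r(Rtqr,t)$, which reproduces $t$, and $\bigcup(\cup tq)r \conv \cup(\bigcup tr)(\bigcup qr)$, which reproduces $r$. In the delicate recursor case, suppose an inner redex $t \succ_1 t'$. Firing the recursor first yields $r(Rtqr,t)$, whose two copies of $t$ reduce to $r(Rt'qr,t')$; reducing $t$ first yields $R_\sigma(St')qr$, which the recursor sends to the same term $r(Rt'qr,t')$. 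Inner redexes in $q$ or $r$, and the analogous configurations for the distributive $\bigcup$ rule, are handled in exactly the same fashion.

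The main obstacle is precisely this duplication of subterms by the recursor (and by the distributive $\bigcup$ rule): when an inner redex is copied by a root contraction, one must reduce \emph{all} of its copies in the contractum in order to reclose the diagram. Left-linearity of the rules guarantees that this joining is always available, so local confluence holds; combined with strong normalization through Newman's Lemma, this yields the Church-Rosser property.
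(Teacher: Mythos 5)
Your proposal is correct and takes essentially the same route as the paper: strong normalization plus Newman's Lemma reduces the problem to local confluence, which is then verified by splitting into disjoint redexes (trivially commuting) and nested redexes, with the duplicating recursor case closed exactly as in the paper's own illustrative computation. Your explicit critical-pair and left-linearity language simply makes precise the paper's observation that each conversion's result depends only on subterm instances that the conversion itself leaves unaltered, so inner reductions can be replayed on all copies.
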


\begin{proof}
	Since we have the strong normalization property, by Newman's lemma \cite{TroelstraSchwichtenberg(96)}, it is enough to show that if $t_0 \succ_1 t_1$ and $t_0 \succ_1 t_2$, there exists a term $t_3$ such that $t_1 \succeq t_3$ and $t_2 \succeq t_3$ (that is, the calculus is weakly confluent).
	If the reductions from $t_0$ to $t_1$ and from $t_0$ to $t_2$ involve subterms which do not intersect, we obtain $t_3$ from $t_0$ by applying both conversions.
	
	We focus then on the case where one of the subterms to reduce is included in the other. We note that, for any of the conversions (including the new conversions for the recursor), the resulting terms depend on subterms of the original term, which are not altered. Thus, if $t_1$ is obtained from $t_0$ by the conversion, any other reduction $t_2$ is obtained from $t_0$ by reduction of one of the subterms $q$. We obtain $t_3$ taking $t_1$ and applying the same reductions for every instances of $q$ in the subterm resulting from the conversion. Then $t_3$ is obtained from $t_2$ applying the same conversion as from $t_0$ to $t_1$.
	
	We illustrate with the following two cases involving $R$:
	\begin{itemize}
		\item[] If $t_0 \equiv R(St)qr$, $t_1 \equiv r(Rtqr,t)$, $t_2 \equiv R(St)qr'$ then we take $t_3 \equiv r'(Rtqr',t)$
		\item[] If $t_0 \equiv R0qr$, $t_1 \equiv q$, $t_2 \equiv R0q'r$ then we take $t_3 \equiv q'$.
	\end{itemize}
\end{proof}

\begin{theorem}
	If $t$ is a closed normal term of type $N$ then $t$ is a numeral $\overline{n}$, i.e., it is of the form $S\cdots S0$.
\end{theorem}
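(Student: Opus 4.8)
The plan is to argue by induction on the size (number of symbols) of the closed normal term $t$ of type $N$, exploiting the fact that the terms of this calculus are generated from constants and variables by application alone: there are no variable binders, since $\lambda x.t$ is only an abbreviation furnished by Combinatorial Completeness. Consequently every subterm of a closed term is itself closed, and in particular every ground-type subterm of $t$ is a closed normal term of type $N$ of strictly smaller size, to which the induction hypothesis applies.

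First I would write $t$ in head form as $t \equiv h\,t_1\cdots t_k$ with $k\ge 0$, where $h$ is the leftmost symbol. Since $t$ is closed, $h$ cannot be a variable, so $h$ is one of the constants of the calculus: a combinator $\Pi$ or $\Sigma$, a star constant $\mathfrak{s}$, $\cup$ or $\bigcup$, or an arithmetical constant $0$, $S$ or $R_\sigma$ (in the arithmetical setting the only ground-type function symbols are $0$ and $S$). The star constants can be discarded at once: under full application their value type is always a star type, while under-applied they have an arrow type, so none of them can produce the ground type $N$. The combinators are discarded by combining typing with normality: while $\Pi$ (resp. $\Sigma$) is applied to fewer arguments than its arity the term has an arrow type, which is impossible as $t:N$, whereas once it receives its full arity the initial segment $\Pi t_1 t_2$ (resp. $\Sigma t_1 t_2 t_3$) is a redex, contradicting normality. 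Hence $h \in \{0, S, R_\sigma\}$.

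It then remains to treat these three heads. If $h$ is $0$, then $0:N$ admits no arguments, so $k=0$ and $t \equiv 0 \equiv \overline{0}$. If $h$ is $S$, then since $t$ has ground type $S$ must be saturated, so $k=1$ and $t \equiv S\,t_1$ with $t_1:N$; as $t_1$ is a closed normal subterm of smaller size, the induction hypothesis gives $t_1 \equiv \overline{n}$, whence $t \equiv \overline{n+1}$. The decisive case is $h \equiv R_\sigma$: again $t$ has ground type, so $R_\sigma$ must be saturated, in particular applied to its first argument $t_1:N$, and the initial segment $R_\sigma t_1 t_2 t_3$ occurs as a subterm of $t$. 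Applying the induction hypothesis to the smaller closed normal term $t_1:N$ yields that $t_1$ is a numeral, i.e. $t_1 \equiv 0$ or $t_1 \equiv S t'$; in either case $R_\sigma t_1 t_2 t_3$ matches the left-hand side of one of the recursor conversions $R_\sigma 0 q r \conv q$ or $R_\sigma (S t') q r \conv r(R t' q r, t')$, hence is a redex, contradicting the normality of $t$. Thus $R_\sigma$ cannot be the head, and the surviving heads $0$ and $S$ force $t$ to be a numeral.

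I expect the recursor case to be the main obstacle, precisely because it is the only constant that cannot be eliminated by typing alone: whether $R_\sigma t_1 t_2 t_3$ forms a redex depends on the \emph{shape} of its first argument, not merely on its type, and it is exactly here that the induction hypothesis — that the ground-type argument $t_1$ is already a numeral — becomes indispensable.
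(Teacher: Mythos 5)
Your proof is correct and follows essentially the same route as the paper's: decompose the closed term as a constant head applied to arguments, rule out the non-arithmetical constants, and handle the recursor by observing that (by induction) its first, ground-type argument must already be a numeral, so that $R_\sigma t_1 t_2 t_3$ forms a redex, contradicting normality. The only difference is one of presentation: the paper delegates the combinator and star-constant cases to the proof in the cited earlier work, whereas you argue them explicitly via typing and normality, making the proof self-contained.
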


\begin{proof}
	Closed terms are of the form $at_1\cdots t_m$ where $t_i$ are closed terms and $a$ is a constant. After the proof in \cite{FerreiraFerreira(17)}, we only need to verify if $R_\sigma t_1\cdots t_m$ can be a closed normal term of type $N$. The only possibility would be $R_\sigma tqrt_1\cdots t_n$ (where $\sigma$ is $\sigma_1 \rta \cdots \rta \sigma_n \rta N$). 
	
	By an inductive argument we may suppose that $t$ is a numeral. Therefore $R_\sigma tqrt_1\cdots t_n$ is not normal.
	
	Thus, the recursor does not contribute for the closed normal terms of type $N$, being those the terms $S\cdots S0$ with a finite (possible zero) number of $S$'s.  
	
\end{proof}


\begin{theorem}
	If $t$ is a closed normal term of star type $\rho^*$, then $t$ is set-like and $\mathrm{SM}(t)$ is a finite non-empty set of closed normal terms of type $\rho$.
\end{theorem}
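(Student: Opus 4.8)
The plan is to follow the structural analysis of closed normal terms used in \cite{FerreiraFerreira(17)}, adding only the treatment of the new arithmetical constants, and to run an induction on the size of $t$. As recalled in the proof of the preceding theorem, every closed term is of the form $at_1\cdots t_m$ where $a$ is a constant and the $t_i$ are closed; if the term is normal, so are all the $t_i$. I would therefore argue by case analysis on the head constant $a$, showing via the types that the only way $at_1\cdots t_m$ can be a closed normal term of a star type is with $a$ equal to $\mathfrak{s}$ (and $m=1$) or to $\cup$ (and $m=2$).

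First I would dispose of the constants already present in the pure calculus exactly as in \cite{FerreiraFerreira(17)}: a function symbol $f$ of $\mathcal{L}$ only yields ground or arrow types; $\Pi$ and $\Sigma$ can occur in a normal term only under-applied (a second argument for $\Pi$, a third for $\Sigma$, would create a redex), so they head arrow types rather than star types; and $\bigcup t_1 t_2$ is always a redex, since its first argument $t_1$ is a closed normal term of star type and hence, by the induction hypothesis of this very analysis, set-like, i.e. of the form $\mathfrak{s}q$ or $\cup q_1 q_2$, matching one of the two $\bigcup$-conversions. This leaves $\mathfrak{s}$ and $\cup$, which indeed produce set-like terms.

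The genuinely new case is the recursor, and I expect it to be the main (indeed only nontrivial) obstacle. The constants $0$ and $S$ have types $N$ and $N\rta N$, so they can never head a term of star type. For $R_\sigma$, write $\sigma$ as $\sigma_1\rta\cdots\rta\sigma_n\rta\rho$; to reach a star type one must saturate at least the first three argument places, forming $R_\sigma tqr t_1\cdots t_n$ with $t$ a closed normal term of type $N$. Here I invoke the preceding theorem: such a $t$ is a numeral, that is, it is $0$ or of the form $St'$, so $R_\sigma tqr$ is a redex and the whole term fails to be normal. Hence $R_\sigma$ cannot head a closed normal term of star type either, and the case is resolved entirely by the normal-form theorem for type $N$ proved just above.

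It then follows that $t$ is $\mathfrak{s}q$ or $\cup t_1 t_2$, hence set-like. For the statement about $\mathrm{SM}(t)$ I would argue by induction on this set-like structure. In the base case $t\equiv\mathfrak{s}q$ we have $\mathrm{SM}(t)=\{q\}$, and $q$, being a subterm of the closed normal term $t$ of type $\rho^*$, is itself a closed normal term of type $\rho$; in particular $\mathrm{SM}(t)$ is a nonempty finite set. In the case $t\equiv\cup t_1 t_2$ both $t_1,t_2$ are closed normal terms of type $\rho^*$, so by the induction hypothesis $\mathrm{SM}(t_1)$ and $\mathrm{SM}(t_2)$ are nonempty finite sets of closed normal terms of type $\rho$, and $\mathrm{SM}(t)=\mathrm{SM}(t_1)\cup\mathrm{SM}(t_2)$ inherits these properties. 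Since the third clause in the definition of $\mathrm{SM}$ never applies to set-like terms, nonemptiness is automatic.
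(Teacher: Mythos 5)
Your proposal is correct and takes essentially the same approach as the paper: the paper establishes the set-like part by an induction on the term which it cites from \cite{Ferreira(20)} (exactly the head-constant case analysis you spell out, including dismissing $\bigcup t_1t_2$ as a redex and the recursor case via the preceding numeral theorem), and its argument for $\mathrm{SM}(t)$ --- base case $\mathfrak{s}r$, inductive case $\cup t_1t_2$ --- is the same induction on set-like structure you give. The only difference is that you make explicit the structural analysis the paper outsources to its reference, which is a faithful and complete reconstruction.
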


\begin{proof}
	
	The proof, by induction on the term $t$, that $t$ is set-like, i.e., $t$ is of the form $\mathfrak{s}r$ or $\cup t_1t_2$,  can be found in \cite{Ferreira(20)}. The last assertion of the theorem follows immediately, noticing that: i) $\mathrm{SM}(\mathfrak{s}r)=\{r\}$, and being $\mathfrak{s}r$ a closed normal term of type $\rho^*$, then $r$ has to be a closed normal term of type $\rho$, ii) $\mathrm{SM}(\cup t_1t_2)=\mathrm{SM}(t_1) \cup \mathrm{SM}(t_2)$, following the result by induction hypothesis.   	
	
	
	
	
	
	
	
\end{proof}

The star combinatory calculus within the arithmetic framework was also used in \cite{Ferreira(20b),EnesFerreira(23)} in the context of herbrandized functional interpretations for (respectively classical and semi-intuitionistic) second-order arithmetic. 

\subsection{Heyting Arithmetic $\HA$}

Although the herbrandized modified realizability was introduced in Section \ref{hmr} within the realm of logic, specifically  semi-intuitionistic logic, it can also be applied within the Heyting arithmetic context.

With that in view, consider the star combinatory calculus in the language of arithmetic described in subsection \ref{star_arit}. Consider also the universal axioms for the successor $S$ and the recursor $R$ (as in \cite{TroelstravanDalen(88)} or \cite{Pinto(2019)})
and the (unrestricted) induction axiom scheme:
\begin{itemize}
	\item[] $A(0) \land \forall n (A(n)\to A(Sn)) \to \forall n\,A(n).$
\end{itemize}

We denote by $\HA$ the theory consisting of $\IL$ (in the language of arithmetic) with the above arithmetical axioms.

The soundness theorem has an arithmetical extension.

\begin{theorem}[Soundness, arithmetical extension]
	Let $A$ be a formula with free variables $\ua$. Let $T_{\not\exists}$ be a set of $\exists$-free formulas. If
	\begin{equation*}
		\HA + \AC + \mathrm{IP}^*_{\not\exists} + T_{\not\exists} \vdash A(\ua)
	\end{equation*}
	
	then there exist closed terms $\ut$ such that
	\begin{equation*}
		\HA +T_{\not\exists} \vdash  A_{HR}(\ua,\ut\ua)
	\end{equation*}
\end{theorem}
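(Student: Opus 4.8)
The plan is to argue by induction on the derivation, exactly as in the purely logical Soundness theorem. Since $\HA$ extends $\IL$ (in the language of arithmetic) and the verifying theory $\HA + T_{\not\exists}$ extends $\IL + T_{\not\exists}$, every case already treated there --- the $\ILp$ axioms and rules, the axioms for $=$, $\Pi$, $\Sigma$ and the star types, and the principles $\AC$ and $\mathrm{IP}^*_{\not\exists}$ --- carries over verbatim and produces the same closed realizing terms. Thus only the genuinely arithmetical axioms need attention: the universal (equational) axioms for the successor $S$ and the recursor $R$, and the induction axiom scheme.

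The equational axioms for $S$ and $R$ contain no unbounded existential quantifiers, so they are $\exists$-free. As with the axioms for $=$, $\Pi$, $\Sigma$ and the star types in the logical proof, an $\exists$-free formula satisfies $A^{HR} \equiv A_{HR} \equiv A$ and hence realizes itself: these axioms need no terms and are provable outright in $\HA + T_{\not\exists}$.

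The substantive new case is the induction scheme $A(0) \land \forall n (A(n) \to A(Sn)) \to \forall n\,A(n)$. Writing $A(n)^{HR} \equiv \exists \ux\, A_{HR}(n,\ux)$, the translated premises provide a witness $\ux$ for $A_{HR}(0,\ux)$ together with $\uV$ satisfying $\forall n \forall \ux\,(A_{HR}(n,\ux) \rta A_{HR}(Sn,\uV n \ux))$, while the translated conclusion asks for $\uW$ with $\forall n\, A_{HR}(n,\uW n)$. The natural realizer is obtained by primitive recursion on $n$, setting $\uW 0 := \ux$ and $\uW(Sn) := \uV n (\uW n)$. This is exactly the recursor of the arithmetical star calculus: taking the step term $r := \lambda w, m.\,\uV m w$, the conversions $R\,0\,\ux\,r \conv \ux$ and $R\,(Sn)\,\ux\,r \conv r(Rn\ux r, n) = \uV n (Rn\ux r)$ deliver precisely this recursion, so we set $\ut_{\uW} := \lambda \ux, \uV, n.\, R\, n\, \ux\, (\lambda w, m.\,\uV m w)$, understood componentwise on the tuples (one recursor per component). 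Since $\ux$ and $\uW n$ are of end-star type and $R$ preserves the witness type --- recall that $\sigma$ in $R_\sigma$ may itself be a star type --- the realizer is of the required shape, and its availability rests on the strong normalization and Church--Rosser properties of the arithmetical calculus established earlier in this section.

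It remains to verify, inside $\HA + T_{\not\exists}$, that the two translated premises imply $\forall n\, A_{HR}(n,\ut_{\uW}\ux\uV n)$. This is where the induction scheme of the verifying theory $\HA$ enters: one applies (unrestricted) induction to the $\exists$-free formula $A_{HR}(n,\uW n)$. The base case $A_{HR}(0,\uW 0)$ is the first premise, and the induction step follows by instantiating the second premise at $\ux := \uW n$, which gives $A_{HR}(n,\uW n) \rta A_{HR}(Sn,\uV n (\uW n))$ and hence $A_{HR}(Sn,\uW(Sn))$. The main obstacle --- indeed essentially the only new content --- is this induction case: matching the shape of the recursor's conversion to the required recursion on tuples of end-star type, and observing that the verification needs exactly the induction scheme applied to an $\exists$-free matrix, which is available in $\HA$. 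Everything else is inherited unchanged from the logical Soundness theorem.
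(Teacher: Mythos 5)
Your proposal is correct and follows essentially the same route as the paper: the $S$ and $R$ axioms are dismissed as $\exists$-free, and the induction scheme is realized by the recursor $R$ with step term $\lambda w,m.\,\uV m w$ (the paper's $\tilde{\uX} := \lambda \ux,n.\,\uX n\ux$), verified inside $\HA$ by applying the unrestricted induction axiom to $A_{HR}(n,\uW n)$ together with the recursor equations $\uW 0 = \ux$ and $\uW(Sn) = \uV n(\uW n)$. The only cosmetic difference is your side remark attributing the realizer's availability to strong normalization and Church--Rosser; in the formal verification one only needs the universal (equational) axioms for $R$, which is what the paper uses.
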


\begin{proof}
	The axioms for $S$ and $R$ are $\exists$-free formulas and do not require terms.
	
	Consider the axiom scheme $A(0) \land \forall n (A(n)\to A(Sn)) \to \forall n\,A(n)$.
	
	We need terms $\ur_{\uX'}$ such that:
	\begin{equation*}
		\forall \ux,\uX (A_{HR}(0,\ux) \land \forall n,\ux' (A_{HR}(n,\ux') \rta A_{HR}(Sn,\uX n\ux')) \rta \forall n\, A_{HR}(n,\ur_{\uX'}\ux\uX n))
	\end{equation*}
	
	Take $\ur_{\uX'} := \lambda \ux,\uX,n. Rn\ux\tilde{\uX}$ where $\tilde{\uX} := \lambda \ux,n.\uX n\ux$. Take $\ux,\uX$ such that
	\begin{equation*}
		A_{HR}(0,\ux) \land \forall n,\ux' (A_{HR}(n,\ux') \rta A_{HR}(Sn,\uX n\ux'))
	\end{equation*}
	
	We show that $\forall n\, A_{HR}(n,\ur_{\uX'}\ux\uX n)$ using the induction axiom. First note that $\ur_{\uX'}\ux\uX 0 = \ux$. Now assume $A_{HR}(n,\ur_{\uX'}\ux\uX n)$. By the assumption we have $A_{HR}(Sn,\uX n(\ur_{\uX'}\ux\uX n))$. Furthermore, we have:
	\begin{equation*}
		\ur_{\uX'}\ux\uX(Sn) = R(Sn)\ux\tilde{\uX} = \tilde{\uX}(Rn\ux\tilde{\uX},n)=\tilde{\uX}(\ur_{\uX'}\ux\uX n,n)=\uX n(\ur_{\uX'}\ux\uX n)
	\end{equation*}
	which gives us the result.
\end{proof}

\bmhead{Acknowledgments}

The authors are grateful to Fernando Ferreira for interesting discussions on the topic. Both authors acknowledge the support of Funda\c{c}\~{a}o para a Ci\^{e}ncia e a Tecnologia under the projects: UIDB/04561/2020, UIDB/00408/2020 and UIDP/00408/2020 and are also grateful to Centro de Matem\'{a}tica, Aplica\c{c}\~{o}es Fundamentais e Investiga\c{c}\~{a}o Operacional (Universidade de Lisboa). The first author is also grateful to  LASIGE - Computer Science and Engineering Research Centre (Universidade de Lisboa). The second author also benefitted from Funda\c{c}\~{a}o para a Ci\^{e}ncia e a Tecnologia doctoral grant 2022.12585.BD.










\bibliography{sn-bibliography}

\end{document}